\documentclass{amsart}

\usepackage{lineno}
\usepackage[all]{xy}
\usepackage{amsmath}
\usepackage{amssymb}
\usepackage{array}
\usepackage{color}
\usepackage{verbatim}

\usepackage[
    colorlinks=true,
    linkcolor=blue,
    citecolor=blue,
    urlcolor=blue
]{hyperref}

\newtheorem{theorem}{Theorem}[section]
\newtheorem{lemma}[theorem]{Lemma}

\theoremstyle{definition}
\newtheorem{definition}[theorem]{Definition}

\theoremstyle{remark}
\newtheorem{remark}[theorem]{Remark}

\def\Q{{\mathbb Q}}
\def\Der{\mathrm{Der}}
\def\Hom{\mathrm{Hom}}

\def\cat0{\mathrm{cat}\_0}

\def\ker{\mathrm{ker}}

\def\B{B\mathrm{aut}_1}

\numberwithin{equation}{section}

\begin{document}
\author{Yang Bai}
\address{School of Mathematical Sciences, Nankai University, Tianjin 300071, P.R.China}
\email{1687694204@qq.com}

\author{Xiugui Liu}
\address{School of Mathematical Sciences and LPMC, Nankai University, Tianjin 300071, P.R.China}
\email{xgliu@nankai.edu.cn}

\author{Jiaxi Zha}
\address{School of Mathematical Sciences, Nankai University, Tianjin 300071, P.R.China}
\email{1093913699@qq.com}

\thanks{The second author was supported in part by the National Natural Science Foundation of China (Grant No. 12171165).}
\subjclass[2010]{55P62}
\date{}
\title[Rational Realization of Classifying Spaces]{On The Rational Realization of Even-dimensional Spheres and Products of Eilenberg--MacLane Spaces as Classifying Spaces}

\begin{abstract}
In this paper, we study the rational realization problem for the classifying space $\B(X)$. We prove that if $S^{2n}$ is realized as $\B(X)$ for a simply-connected space $X$, then $X$ is $\pi$-infinite and has vanishing rational Gottlieb elements above degree $2n-1$. In particular, even-dimensional spheres cannot be realized as $B\mathrm{aut}_1(X)$ for any simply-connected $\pi$-finite space $X$. We also prove that, for all $n\geq 2$ and $s,t\geq 1$, the product of Eilenberg--MacLane spaces $K(\Q^s,n)\times K(\Q^t,n+1)$ cannot be realized as $B\mathrm{aut}_1(X)$ for any simply-connected $\pi$-finite space $X$. Moreover, we prove that if $r\geq 2$ and $n\geq 3$ and $K(\Q^r,n)$ is realized as $\B(X)$ for a $\pi$-finite space $X$, then $X\simeq_{\Q}K(\Q^r,n-1)$. The proofs are based on two structural results for Gottlieb elements in the derivation Lie algebra of a Sullivan minimal model, which provide a uniform method for these realization problems.
\end{abstract}

\keywords{Rational homotopy theory, classifying spaces, Sullivan minimal models, Lie models, Gottlieb elements, derivations.}

\maketitle

\section{Introduction}

The classification theory for fibrations was developed in a series of foundational papers \cite{D,M,J}. For a fixed CW complex $X$, the universal fibration $X\rightarrow UE \rightarrow B{\rm aut}(X)$ establishes a one-to-one correspondence from the set of fiber homotopy types of fibrations over $B$ with fibre $X$ to the set of homotopy classes of maps, $[B,B{\rm aut}(X)]$. In this paper, we will consider the classifying space $B{\rm aut}_1(X)$, which classifies fibrations with simply-connected base space. It can be also considered as the universal cover of $B{\rm aut}(X)$. 

The space $\B(X)$ was among the first objects described in rational homotopy theory, and the algebraic models for $B{\rm aut}_1(X)$ were given by many authors (cf. \cite[Ch.7]{T}). Sullivan gave a Lie model for $B{\rm aut}_1(X)$ in terms of derivations of the Sullivan minimal model \cite{S}. Schlessinger and Stasheff constructed another equivalent model for $B{\rm aut}_1(X)$ in terms of derivations of a Quillen minimal model \cite{SS}. 

A long-standing and open problem in rational homotopy theory was raised by Schlessinger in 1976 (see \cite[p.519]{F-H-T}), which seeks to characterize the space which can be realized as the classifying space $\B (X)$ for some space $X$ up to rational homotopy equivalence. 

Several results have been given in recent years with the assumption of $X$ $\pi$-finite. Recall that a topological space is called $\pi$-finite if it has only finitely many non-zero homotopy groups. Otherwise the space is called $\pi$-infinite. Lupton and Smith \cite{J-B, 2019L-S} constructed some infinite families of spaces which can be realized as classifying spaces, and also showed that $\mathbb{C}P^2$, $\mathbb{C}P^3$, $\mathbb{C}P^4$ and $S^4$ are not realized as $\B(X)$ for $X$ simply-connected and $\pi$-finite. Later, Bai, Liu and Xie \cite{BY} proved the space with Sullivan minimal model of the form $(\Lambda(a, b), db=a^p)$ for $|b|\leq 5n+2$ cannot be realized as $\B(X)$ for $X$ $n$-connected and $\pi$-finite. 

Subsequently, Lupton and Smith \cite{GE} studied simply-connected space $X$ satisfying $\B(X)\simeq _{\Q} S^{2n}$ for $n=1,2,3$ without the assumption of $X$ $\pi$-finite. They proved that such $X$ must be $\pi$-infinite with vanishing rational Gottlieb elements above degree $2n-1$. Recently, Bai and Liu \cite{BY1} generalized their result to the case $n=4,5$, and proved the non-realization of $\B (X)\simeq_{\mathbb{Q}} K(\Q^s,n) \times K(\Q^t,n+1)$ with $2\leq n\leq 10$ and $s,t\geq 1$ for $\pi$-finite space. It should be noted that, so far, all the non-realization results are given within the restricted dimensions. 

The aim of the present paper is to remove these dimension restrictions by a uniform method. Our approach is based on the derivation Lie model of $\B(X)$ given by Sullivan \cite{S}. We prove two structural results for Gottlieb elements in the derivation Lie algebra of a Sullivan minimal model. The first produces compatible derivations from a Gottlieb element, while the second shows that two Gottlieb elements impose strong restrictions on their degrees and on the generators of the minimal model. These results provide the main technical input for all of our applications and allow us to treat several realization problems simultaneously in arbitrary degree. In particular, our extension of the previous results to arbitrary degrees does not rely on a degree-by-degree analysis. Moreover, the same results provide useful tools for several further realization problems, which will be studied in forthcoming work. 

As applications, we obtain the following extensions of the above results to all degrees. 

\begin{theorem}\label{thm1}
    Let $X$ be a simply-connected space with rational homology of finite type. Suppose that
    $$\B (X)\simeq_{\mathbb{Q}} S^{2n}.$$
    Then $X$ is $\pi$-infinite with vanishing rational Gottlieb elements above degree $2n-1$. In particular, the rational homotopy type of $S^{2n}$ cannot be realized as the classifying space of any $\pi$-finite space. 
\end{theorem}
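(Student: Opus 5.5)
We will work throughout with Sullivan's derivation model. Let $(\Lambda V,d)$ be the Sullivan minimal model of $X$. The space $\B(X)$ then has as Lie model the dg Lie algebra $\Der(\Lambda V)$ of positive-degree derivations, truncated in degree $1$ to the cycles, with differential $D=[d,-]$. Its rational homotopy satisfies
$$\pi_{k+1}(\B(X))\otimes\Q\cong H_k(\Der(\Lambda V),D).$$

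\textbf{Consequences of $\B(X)\simeq_\Q S^{2n}$.} The rational homotopy Lie algebra of $S^{2n}$ is spanned by a class $\alpha$ of degree $2n-1$ and $[\alpha,\alpha]\neq 0$ of degree $4n-2$. Hence $H_*(\Der(\Lambda V))$ is concentrated in degrees $2n-1$ and $4n-2$, each one-dimensional. The bracket of the class $\theta$ in degree $2n-1$ with itself is nonzero.

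Next we use the evaluation map $\mathrm{ev}\colon\Der(\Lambda V)\to\Hom(V,\Q)$, $\theta\mapsto \epsilon\circ\theta|_V$. Its image in homology is the dual of the rational Gottlieb groups. Rational Gottlieb elements of $X$ in degree $k$ therefore correspond to classes of degree $k$ in $H_*(\Der)$ with nonzero evaluation. So Gottlieb elements can only occur in degrees $2n-1$ and $4n-2$.

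\textbf{Ruling out degree $4n-2$.} Suppose the class $[\theta,\theta]$ evaluated nontrivially. We would use the second structural result on pairs of Gottlieb elements. Applying the first structural result to a Gottlieb representative $\theta$ produces compatible derivations. A Gottlieb element in degree $4n-2$, together with the bracket structure, then forces restrictions on generator degrees. The aim is to show that either $[\theta,\theta]$ is a boundary, contradicting its nonvanishing in homology, or the existence of the second Gottlieb element contradicts the degree constraints. Concretely, $[\theta,\theta]$ is a bracket of a derivation with itself. Evaluating it at a generator $v$ gives $2\,\epsilon\theta(\theta v)$, and this vanishes on the indecomposable part unless $\theta$ has a specific "linear" shape. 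The structural lemma should exclude that shape. This yields vanishing rational Gottlieb elements above degree $2n-1$.

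\textbf{$\pi$-infiniteness.} Suppose $X$ were $\pi$-finite, so $V$ is finite-dimensional. Take the top generator $v_N$ of $V$. It is a standard fact (Félix–Halperin–Thomas) that for $\pi$-finite $X$ the dual of the top-degree generator is a Gottlieb element: the derivation $\partial/\partial v_N$ is a $D$-cycle because $v_N$ does not occur in any differential, and it is not a boundary. This forces $|v_N|\in\{2n-1,4n-2\}$, so by the previous step $|v_N|=2n-1$, i.e.\ $V$ is concentrated in degrees $\le 2n-1$. The top generator has odd degree $2n-1$, so $(\partial/\partial v_N)^2=0$ and it cannot generate the nonzero square.

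Instead we would exhibit a derivation in degree $4n-2$ and argue it is a boundary. Alternatively we can count degrees: derivations of degree $4n-2$ must send $V^{k}$ to $(\Lambda V)^{k-4n+2}$ with $k\le 2n-1$, which is negative. Hence $\Der_{4n-2}(\Lambda V)=0$, contradicting $H_{4n-2}\neq 0$. So $X$ is $\pi$-infinite. The final assertion is immediate.

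\textbf{Main obstacle.} The hard step is excluding a Gottlieb element in degree $4n-2$ in the $\pi$-infinite setting, where no degree bound is available. There we must rely fully on the structural results about compatible derivations and pairs of Gottlieb elements.
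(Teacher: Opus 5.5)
Your setup is correct: $H_*(\Der'(\Lambda V),D)$ is one-dimensional in degrees $2n-1$ and $4n-2$, spanned by $[\theta]$ and $[\theta,\theta]$, and Gottlieb elements are the nonzero $L_{-1}$-parts of cycles. Your $\pi$-infiniteness step is also correct \emph{provided} $G_{>2n-1}(X_\Q)=0$ is known. The dual $v_N^*$ of a top generator is a cycle, and it is not a boundary because boundaries have zero $L_{-1}$-part. Hence $|v_N|\le 2n-1$, so $\Der_{4n-2}(\Lambda V)=0$, contradicting $H_{4n-2}\neq 0$. This is a self-contained substitute for the paper's appeal to Theorem~\ref{pf}, which gives $\pi_{4n-2}(X)\otimes\Q=\Q$ as the top group and hence a Gottlieb element in degree $4n-2$.

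The gap is the step everything rests on: ruling out a Gottlieb element in degree $4n-2$. You invoke ``the structural results'' without stating them or checking their hypotheses. The mechanism you suggest, a lemma excluding a nonzero linear part of $\theta$, is not what works: nothing forbids $L_0(\theta)\neq 0$ on its own.

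Here is what is missing.

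\textbf{First,} your formula gives $L_{-1}[\theta,\theta]=2\,L_{-1}(\theta)\circ L_0(\theta)$ on $V$. So a Gottlieb element in degree $4n-2$ forces $L_{-1}(\theta)=y_1^*\neq 0$ as well. Write $L_{-1}[\theta,\theta]=y_2^*$.

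\textbf{Second,} since $D$ raises wordlength, the homology computation yields condition $(G_k)$ for $1\le k\le 4n-4$. That is, every cycle of such degree with zero $L_{-1}$-part is a boundary; in degree $2n-1$ this uses $L_{-1}(\theta)\neq 0$.

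\textbf{Third,} and this is the real content (Lemmas~\ref{lm1} and~\ref{lm2}), use $(G_k)$ to extend $[\theta,\theta]$ along the acyclic closure $\Lambda\overline W\otimes\Lambda W$, where $W=V_0\oplus V_1$. Concretely, choose a derivation $\Phi(\overline{y_1})$ of degree $2n$ with $D\Phi(\overline{y_1})=\pm\Phi(y_1-sdy_1)$. The right side is a cycle of degree $2n-1$ with zero $L_{-1}$-part, hence a boundary.
\begin{itemize}
\item Evaluating at $y_2$ and taking linear parts gives $w^*(d_1y_2)=\pm y_1$, where $w^*=L_{-1}\Phi(\overline{y_1})$.
\item On the other hand, $D\theta=0$ evaluated at $y_2$ gives $y_1^*(d_1y_2)=0$.
\item Since $[y_1^*,w^*]=0$, these two facts give $\pm 1=0$, a contradiction.
\end{itemize}
Even having $\overline{y_1}$ available requires $y_1\in W$. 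That means first proving $d=0$ on $V^{<2n-1}$, which needs a further argument of the same kind, via $\Phi(\overline{x}^r)$ for an odd low-degree generator $x$ together with injectivity of $L_{-1}\circ\Phi$.

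None of this appears in your proposal. So the main claim $G_{>2n-1}(X_\Q)=0$, and with it the $\pi$-infiniteness, remains unproved.
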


\begin{remark}
	In forthcoming work, we show that for every $n\geq 1$, $S^{2n}$ can be realized as $\B(X)$ for some simply-connceted $\pi$-infinite space $X$. Thus the $\pi$-finiteness condition imposes a strong restriction on the realization problem.
\end{remark}

Our second application gives a non-realization result for products of Eilenberg--MacLane spaces in consecutive degrees.

\begin{theorem}\label{thm2}
    The rational homotopy type of $K(\Q^s,n)\times K(\Q^t,n+1)$ for $n\geq 2$ and $s,t\geq 1$ cannot be realized as the classifying space of any simply-connected $\pi$-finite space with rational homology of finite type. 
\end{theorem}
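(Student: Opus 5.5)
We argue by contradiction: suppose $\B(X)\simeq_{\Q} K(\Q^s,n)\times K(\Q^t,n+1)$ with $X$ simply-connected, $\pi$-finite, of finite type, and let $(\Lambda V,d)$ be its Sullivan minimal model, so $V$ is finite-dimensional. By Sullivan's model, the rational homotopy Lie algebra of $\B(X)$ is $H_*(\Der(\Lambda V))$, positively graded and shifted by one: $\pi_{k+1}(\B(X))\otimes\Q\cong H_k(\Der(\Lambda V))$ for $k\geq1$. The target space has homotopy concentrated in degrees $n$ and $n+1$. Its Whitehead products vanish, so the Lie bracket is zero. Hence $H_*(\Der\Lambda V)$ is $\Q^s$ in degree $n-1$, $\Q^t$ in degree $n$, zero elsewhere, with zero bracket. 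Since the product of Eilenberg--MacLane spaces is formal and coformal with abelian homotopy Lie algebra, the realization also forces a compatible structure on the derivation complex, but we will only use the homology.

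\textbf{Step 1: top Gottlieb element.} For $\pi$-finite $X$, take a generator $v$ of $V$ of maximal degree $N$. The derivation $\theta=\partial/\partial v$ (sending $v\mapsto1$ and the other generators to $0$) is a cycle, since $v$ never appears in any differential $dw$. It is not a boundary. It represents a Gottlieb element, i.e.\ a class in the image of the evaluation map to $\pi_N(X)\otimes\Q$, and this image is nonzero in degree $N$. So $N-1\in\{n-1,n\}$, i.e.\ $N\in\{n,n+1\}$. More generally, every nonzero rational Gottlieb element of $X$ has degree $n$ or $n+1$, because evaluation $H_*(\Der)\to\pi_*(X)\otimes\Q$ detects the derivations $\partial/\partial v$ in top degree.

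\textbf{Step 2: apply the paper's second structural result.} This is the result that two Gottlieb elements impose strong restrictions on their degrees and on the generators. We take two nonzero classes in degrees $n-1$ and $n$ of $H_*(\Der)$ and use the first structural result to produce compatible derivations realizing them. By the second result, their degrees and the shape of $V$ must then satisfy a rigid constraint. I expect this to force $V$ to be concentrated in degrees $\le n+1$ with $d$ built from products of low-degree generators. I would then split into cases $N=n+1$ and $N=n$. In each case I would exhibit an extra nonzero class in $H_*(\Der\Lambda V)$ of degree outside $\{n-1,n\}$, or a nonzero bracket, contradicting the abelian, two-degree structure. The candidates are the degree-$0$ derivation $\sum |v|\cdot v\,\partial/\partial v$ in the grading sense, and composites and brackets of $\partial/\partial v$ with derivations lowering degree by $n-1$. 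If $N=n+1$ and $X$ has a generator $w$ of degree $2$ or more below the top, the derivation $v\mapsto w$-type elements give classes in degree $N-|w|<n-1$.

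\textbf{Main obstacle.} The hard step is guaranteeing that the low-degree derivation cycle produced in Step 2 is not a boundary, uniformly in $n$. This is exactly where earlier degree-by-degree computations were needed. My first attempt would pair it, via evaluation at the top generator, against a Gottlieb element: a boundary $[d,\phi]$ evaluates to a decomposable, while our cycle evaluates to an indecomposable generator. That would show nonvanishing of the homology class and complete the contradiction.
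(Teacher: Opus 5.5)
There is a genuine gap, and it sits in Step 2, where the proof should happen. You use the second structural result as a black box, expect it to constrain ``the shape of $V$'', and then plan a case analysis in search of an extra homology class. The missing idea is what Lemma \ref{lm2} actually concludes. For two cycles $\theta_1,\theta_2$ with linearly independent $L_{-1}$-parts, under $(G_k)$ for $1\le k\le|\theta_1|-2$ and $|\theta_2|-|\theta_1|\le k\le|\theta_2|-2$, it forces $|\theta_1|=|\theta_2|$. So the entire proof is to check the hypotheses for a class $\theta_1$ of degree $n-1$ and the class $\theta_2=y_1^*$ of degree $n$; the conclusion $n-1=n$ is the contradiction. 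The $(G_k)$ conditions hold for $1\le k\le n-2$ simply because $H_k(\Der'(\Lambda V))=0$ there. The real hypothesis to check is that both classes have representatives with nonzero $L_{-1}$, and this needs $V=V^2\oplus\cdots\oplus V^n$. Then a degree-$n$ derivation is just a functional on $V^n$. A degree-$(n-1)$ derivation kills $V^n$, since its image would lie in $(\Lambda V)^1=0$, and sends $V^{n-1}$ to $\Q$, so $\theta_1=x_1^*$ with $x_1\neq 0$. You never check that either class is of this Gottlieb type.

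Your Step 1 does not supply $V^{>n}=0$, because of an off-by-one error. The derivation $\partial/\partial v$ lies in $\Der_N$, so with your own grading $N\in\{n-1,n\}$ and Gottlieb elements live in degrees $n-1,n$, not $n,n+1$. Theorem \ref{pf} (or the observation that $\Der_n=0$ when $N<n$) then gives $N=n$, so the case $N=n+1$ you plan to treat never occurs.

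The candidates in your case analysis also fail:
\begin{itemize}
\item $\Der'$ vanishes below degree $1$, so a degree-$0$ derivation contributes nothing.
\item The bracket of the degree $n-1$ and degree $n$ classes lands in degree $2n-1$, where zero homology is consistent with the target.
\item Your non-exactness test in the ``main obstacle'' is wrong for cycles with $L_{-1}=0$. Only the $L_{-1}$-part of a boundary must vanish, while $L_0([d,\phi])=[d_1,L_{-1}(\phi)]$ can take indecomposable values: for generators $a\neq b$ with $d_1u=ab$, one gets $[d_1,a^*](u)=\pm b$.
\end{itemize}

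The paper's argument runs the other way and never produces new homology classes. It uses the vanishing of low-degree homology as input, to build the maps $\Phi_\theta$ of Lemma \ref{lm1}. The contradiction then comes from a wordlength comparison: $u^*w^*(dy_2)=\pm1$ against $u^*(dy_2)\in\Lambda^{\geq 2}V$.
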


A second aspect of the realization problem is rigidity: once a rational homotopy type $Y$ is realizable as $\B(X)$, to what extent is the rational homotopy type of $X$ determined by $Y$? This problem was considered in \cite{Y} by Yamaguchi, who determined all elliptic spaces $X$ with $B{\rm aut}_1(X)$ rank one. Later, Bai, Liu and Xie \cite{BY} proved that if $K(\Q^r,n)$ with $r\geq2$ can be realized as $\B (X)$ for elliptic space $X$, then $n$ is even and $X$ has the rational homotopy type of $\prod_r S^{n-1}$. The following theorem generalizes the above result to $\pi$-finite spaces.

\begin{theorem}\label{thm3}
    Let $X$ be a simply-connected $\pi$-finite space with rational homology of finite type. Suppose that $$\B(X)\simeq_{\Q}K(\Q^{r},n)$$ for $n\geq 3$ and $r\geq 2$. Then $X\simeq_{\Q} K(\Q^{r},n-1).$
\end{theorem}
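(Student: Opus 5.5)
We work throughout with Sullivan's derivation Lie model. Let $(\Lambda V,d)$ be the minimal model of $X$; $V$ is finite-dimensional because $X$ is $\pi$-finite with rational homology of finite type. The Lie model of $\B(X)$ is the positive-degree truncation $\Der(\Lambda V)$ with bracket $[\theta_1,\theta_2]$ and differential $D=[d,-]$. Its homology satisfies $H_k(\Der(\Lambda V))\cong \pi_{k+1}(\B(X))\otimes\Q$. The hypothesis $\B(X)\simeq_\Q K(\Q^r,n)$ says that this homology is $\Q^r$, concentrated in degree $n-1$, and that it is abelian and coformal in the appropriate sense.

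\textbf{Step 1: the derivations are Gottlieb elements.} Every class in $H_{n-1}$ lies in top degree, because there is no higher homology. I will use this to show that every such class is represented by a derivation $\theta$ that is a Gottlieb element. Concretely, $\theta$ has degree $n-1$, satisfies $D\theta=0$, and is nonzero on some generator $v\in V$ with $\theta(v)\in\Q$.

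The argument compares the long exact sequence of the evaluation fibration with the vanishing of $\pi_{\geq n+1}(\B(X))$. This gives $G_{n-1}(X)\otimes\Q\cong \pi_n(\B(X))\otimes\Q\cong \Q^r$ and $G_k(X)=0$ for all $k\neq n-1$. Here the evaluation map $\pi_n(\B(X))\to \pi_{n-1}(X)$ has image exactly the Gottlieb group, and I must check that it is injective. Its kernel comes from derivations that are decomposable-valued. I would rule these out using the second structural result: two Gottlieb elements, one of them decomposable-valued, would force a nonzero homology class in a degree different from $n-1$.

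\textbf{Step 2: produce $r$ independent generators of degree $n-1$.} We obtain $r\geq 2$ linearly independent Gottlieb derivations $\theta_1,\dots,\theta_r$, all of degree $n-1$. Their values give generators $v_1,\dots,v_r\in V^{n-1}$ with $\theta_i(v_j)=\delta_{ij}$. Next we apply the second structural result to each pair $\theta_i,\theta_j$. This result says that two Gottlieb elements constrain both their degrees and the generators of the minimal model. From it I expect to conclude that the $v_i$ can be chosen with $dv_i=0$, and that no other generator $w$ has $dw$ involving the $v_i$ nontrivially. Otherwise the first structural result would produce compatible derivations. These would have $D$-homology in degrees other than $n-1$, or would make the Lie bracket on $H$ nonzero, which contradicts the Eilenberg--MacLane type of $\B(X)$.

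\textbf{Step 3: split off the $v_i$ and show the complement is trivial.} Step 2 gives a splitting $(\Lambda V,d)\cong \Lambda(v_1,\dots,v_r)\otimes(\Lambda W,d)$, with $d=0$ on the first factor and $W$ finite-dimensional. It follows that $\B(X)$ contains $\B(\Lambda W)$ together with mixed derivations of the form $v_i\mapsto\Lambda W$ and $w\mapsto \Lambda(v)\otimes\Lambda W$.

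If $W\neq 0$, take a top-degree generator $w\in W$. Because $X$ is $\pi$-finite, the derivation $\partial/\partial w$ (dual to $w$) is a $D$-cycle, and it is not a boundary. This gives a nonzero homology class in degree $|w|$. The condition $|w|=n-1$ is excluded because the rank is then exactly $r$, since $\theta_1,\dots,\theta_r$ already account for all of $H_{n-1}$. Hence $W=0$, so $(\Lambda V,d)=(\Lambda(v_1,\dots,v_r),0)$ and $X\simeq_\Q K(\Q^r,n-1)$.

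As a consistency check, the derivation Lie algebra of $(\Lambda(v_1,\dots,v_r),0)$ must itself have homology $\Q^r$ in degree $n-1$. If $n-1$ is even, derivations $v_i\mapsto v_j^k$ are not of positive degree, so this is consistent. If $n-1$ is odd, then $v_iv_j$ has degree $2(n-1)$, and derivations $v_i\mapsto v_jv_k$ would have negative degree. So both parities are consistent.

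\textbf{Main obstacle.} The hardest step is Step 3: controlling $W$ and the mixed derivations, and showing that the top-degree dual derivation is not killed by a boundary. I would first attempt this by induction on $\dim W$, using the first structural result to lift $\partial/\partial w$ compatibly.
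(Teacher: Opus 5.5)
\textbf{The gap is in Step 2, which is where the content of the theorem lies.}

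Step 1 is not needed, and its argument cannot work as written. Lemma \ref{lm2} requires both $L_{-1}(\theta_1)$ and $L_{-1}(\theta_2)$ to be nonzero and independent, so it says nothing about a decomposable-valued derivation. What replaces Step 1 is the degree count behind Theorem \ref{pf}. Let $N$ be the top degree of $V$. Then $\Der_{>N}(\Lambda V)=0$, and $\Der_N(\Lambda V)=\Hom(V^N,\Q)$ consists of cycles, since a top-degree generator cannot occur in any $dv$. None of these cycles is a boundary, because $D$ raises wordlength. Hence $N=n-1$, $V^{n-1}=\Q\{y_1,\dots,y_r\}$, and $H_{n-1}$ is spanned by the duals $y_i^\ast$. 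The same degree reason shows that no $dv$ involves any $y_i$, so that half of your Step 2 is automatic.

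The only real content left is $dy_i=0$ for all $i$. Once you have that, your Step 3 does finish: a top-degree generator $w$ of $V^{<n-1}$ would give a non-bounding cycle $w^\ast$ in degree $|w|\neq n-1$. But $dy_i=0$ is exactly what you only ``expect''.

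\textbf{Why this is not a formality.} The claim is false for $r=1$. For $X=S^{2a}$ with model $(\Lambda(x,y),\,dy=x^2)$, the homology of $\Der'$ is $\Q\,y^\ast$ in degree $4a-1$. So $\B(X)\simeq_\Q K(\Q,4a)$, although $dy\neq 0$. In this example the derivation $\Phi_{y^\ast}(\overline{x})$ from Lemma \ref{lm1} is a nonzero multiple of $x^\ast$, that is, a null-homotopy of $y^\ast x$. It creates no new homology and no nonzero bracket, so the mechanism you propose gives no contradiction. The hypothesis $r\geq 2$ has to enter through an interaction between two independent Gottlieb cycles, and your proposal never supplies one.

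\textbf{How the paper closes it.} Since $H_k=0$ for $k\neq n-1$, all the conditions $\mathrm{(G_k)}$ needed in Lemma \ref{lm2} hold for $\theta_1=y_1^\ast$ and $\theta_2=y_2^\ast$. The lemma then gives $V^{<n-1}=0$ directly, so $V=V^{n-1}$ and $d=0$ by minimality.

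The missing idea is the cross-evaluation on $y_2$ inside the proof of that lemma. Assume there is a generator below degree $n-1$.
\begin{itemize}
\item One uses $\Phi_{\theta_1}$ to produce $u\in V$ with $u^\ast(dy_2)\in\Lambda^{\geq 2}V$; this uses $y_1^\ast(y_2)=0$.
\item One then uses $\Phi_{\theta_2}(\overline{u})$ to produce $w\in V$ with $u^\ast w^\ast(dy_2)=\pm 1$; this uses $y_2^\ast(y_2)=1$.
\end{itemize}
Since $[u^\ast,w^\ast]=0$, these two facts are incompatible. Without an argument of this kind, your Steps 2 and 3 do not constitute a proof.
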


\subsection*{Convention.} Unless otherwise specified, all spaces appearing in this paper are assumed to be simply-connected and have rational homology of finite type.

\subsection*{Organization.} We start by recalling basic facts about rational homotopy theory, classifying spaces and Gottlieb elements in Section 2. Then in Section 3, we establish two structural lemmas on derivation cycles which form the main technical input of the paper. In Section 4, we finally prove Theorems \ref{thm1}, \ref{thm2} and \ref{thm3}.

\section{Preliminaries}

In this section, we first recall basic facts in rational homotopy theory and the results about the classifying space $B{\rm aut}_1(X)$. The standard reference is \cite{F-H-T}.

The Sullivan minimal model of a simply-connected space $X$ with rational homology of finite type is a free commutative differential graded algebra (abbr. CDGA) $(\Lambda V,d)$ with a graded vector space $V=\bigoplus_{i\geq 2}V^i$ where ${\rm dim} V^i< \infty$ and a decomposable differential, i.e.,
$$d(V^i)\subset (\Lambda^+ V\cdot \Lambda^+ V)^{i+1}\ {\rm and}\ d\circ d=0. $$
Here $\Lambda^+ V$ is the augmentation ideal of $\Lambda V$. The degree of an element $x$ is denoted by $|x|$. Then we have
$$xy=(-1)^{|x||y|}yx\ {\rm and}\ d(xy)=d(x)y+(-1)^{|x|}xd(y)$$ for $x, y\in \Lambda V$. Note that there exist the following isomorphisms:
$${\rm Hom}(V^i, \mathbb{Q})\cong \pi_{i}(X)\ {\ \rm and \ }\ H^{\ast}(\Lambda V, d)\cong H^{\ast}(X; \mathbb{Q}).$$

A derivation $\theta$ that reduces degree by $n$ is a linear map $$\theta: \Lambda V\rightarrow \Lambda V$$ of degree $-n$ such that $$\theta(xy)=\theta(x)y+(-1)^{n|x|}x\theta(y)$$ for $x, y\in \Lambda V$. Let $({\rm Der}(\Lambda V),D)$ denote the differential graded Lie algebra (abbr. DGL) of derivations of $\Lambda V$, where the Lie bracket is given by $$[\theta_1,\theta_2]=\theta_1\theta_2-(-1)^{|\theta_1||\theta_2|}\theta_2\theta_1,$$
and the differential $D$ is given by $$D(\theta)=[d,\theta]$$ for $\theta\in {\rm Der}(\Lambda V)$. For convenience, we denote by ${\rm Der}_k(\Lambda V)$ the set of derivations that reduce degree by $k$. In particular, we remark that $(\Der(\Lambda V),D)$ can be viewed as a right $(\Lambda V,d)$-module by setting
$$\theta x(v)=(-1)^{|x||v|}\theta(v)x$$
for $\theta\in \Der(\Lambda V)$ and $x,v\in \Lambda V$, where the module action is given by
\begin{align*}
    \begin{split}
        \Der_k(\Lambda V) \times(\Lambda V)^n &\rightarrow \Der_{k-n}(\Lambda V) \\
        (\theta,x)\ \ \ \ \ \ \ &\mapsto \theta x.
    \end{split}
\end{align*}
It is worth emphasizing that $\theta x$ or $\theta\cdot x$ denotes the action of $x$ on the derivation $\theta$, whereas $\theta(x)$ denotes the image that the derivation $\theta$ sends $x$ to. 

Note that the wordlength is another gradation in $\Lambda V$. In the following decomposition
$$\Lambda V=\bigoplus_{l\geq0}\Lambda^l V$$
with $\Lambda^0 V=\Q$, we call the elements in $\Lambda^l V$ have wordlength $l$. For any $x\in \Lambda^+ V$, $x$ can be decomposed as
$$x=x_1+x_2+\dots,$$
where $x_l\in \Lambda^l V$ for $l\geq 1$. We denote $x_l$ by $L_{l}(x)$ in the above decomposition of $x$. Similarly, by the isomorphism $\Der (\Lambda V)\cong \Hom (V,\Lambda V)$, wordlength in $\Lambda V$ also induces a gradation in $\Der (\Lambda V)$, i.e.,
$$\Der(\Lambda V)\cong \bigoplus_{l\geq -1}\Hom (V,\Lambda ^{l+1}V). $$
Thus for any $\theta\in \Der(\Lambda V)$, $\theta$ can be decomposed as
$$\theta=\theta_{-1}+\theta_0+\dots,$$
where $\theta_l\in \Hom (V,\Lambda ^{l+1}V)$ is a derivation that increases wordlength by $l$. We denote $\theta_l$ by $L_{l}(\theta)$ in the decomposition of $\theta$. Note that the differential $d$ in $\Lambda V$ can be decomposed by
$$d=d_1+d_2+\dots,$$
where $d_l=L_l(d)$. Therefore the differential $D=[d,-]$ in $\Der(\Lambda V)$ increases wordlength at least by 1.

Let ${\rm Der}'(\Lambda V)$ denote the sub Lie algebra of ${\rm Der}(\Lambda V)$ defined by
$${\rm Der}'_k(\Lambda V)=\left\{
\begin{array}{ll}
{\rm Der}_k(\Lambda V),&k>1,\\
Z({\rm Der}_1(\Lambda V)),&k=1,\\
0,&k<1,
\end{array}
\right.$$
where $Z({\rm Der}_1(\Lambda V))$ is the subspace of all cycles in ${\rm Der}_1(\Lambda V)$. We recall Sullivan's original result about the model for the classifying space $B{\rm aut}_1(X)$.

\begin{theorem}\label{liu}\rm{(\cite[p. 313]{S})}
Let $X$ be a space with Sullivan minimal model $(\Lambda V, d)$, then the DGL $({\rm Der}'(\Lambda V),D)$ is a Lie model for $B{\rm aut}_1(X)$.
\end{theorem}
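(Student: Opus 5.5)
The plan is to identify $({\rm Der}'(\Lambda V),D)$ with $B{\rm aut}_1(X)$ through the classification of fibrations. On one side, $B{\rm aut}_1(X)$ represents fibrations with fibre $X$ over simply-connected bases. On the other, rationally such fibrations are classified by relative Sullivan models $(\Lambda W\otimes\Lambda V,\delta)\to(\Lambda V,d)$. The goal is to show that these relative models are in natural bijection with homotopy classes of DGL maps from a Quillen model of the base into ${\rm Der}'(\Lambda V)$. A Yoneda-type argument in the homotopy category of simply-connected rational spaces of finite type then identifies the realization of ${\rm Der}'(\Lambda V)$ with the rationalization of $B{\rm aut}_1(X)$.

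\emph{Step 1: fibrations as Maurer--Cartan elements.} For a simply-connected base with minimal model $(\Lambda W,d_W)$, write the differential of a relative Sullivan algebra $\Lambda W\otimes\Lambda V$ with fibre $(\Lambda V,d)$ as $\delta=d_W\otimes 1+1\otimes d+\tau$. Here $\tau\in \Lambda^+W\widehat{\otimes}{\rm Der}(\Lambda V)$ has total degree $+1$. The condition $\delta^2=0$ is then exactly the Maurer--Cartan equation $d_W\tau+D\tau+\frac12[\tau,\tau]=0$. Because $W^1=0$, only components with $|w|\geq 2$ occur, so $\tau$ lives in $\Lambda^{\geq2}W\otimes{\rm Der}_{\geq 1}$. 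In the component $\Lambda^1 W^2\otimes{\rm Der}_1$ the Maurer--Cartan equation forces $D$-cycles: write that component as $\sum_j w_j\otimes\theta_j$ with $w_j$ a basis of $W^2$. The terms of the equation in $\Lambda^1W^2\otimes{\rm Der}_0$ are then just $\sum_j w_j\otimes D\theta_j$, since $d_W$ is decomposable, $d_WW^2=0$ and the bracket term lies in $\Lambda^{\geq 2}W$. So each $\theta_j$ is a cycle. This is precisely why ${\rm Der}'_1=Z({\rm Der}_1)$ and ${\rm Der}'_{<1}=0$. Isomorphisms of relative algebras fixing base and fibre correspond to the gauge action of $\exp(\Lambda^+W\otimes{\rm Der}_0)$, and homotopies to gauge equivalences over $\Lambda W\otimes\Lambda(t,dt)$. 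Thus fibre-homotopy classes of fibrations over $B$ correspond to $\mathrm{MC}(\Lambda W\widehat\otimes{\rm Der}'(\Lambda V))/\!\sim$.

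\emph{Step 2: Maurer--Cartan elements as maps.} By the standard adjunction between the Chevalley--Eilenberg cochain functor $C^*$ and Quillen's functor $\mathcal L$, this Maurer--Cartan set is in bijection with CDGA maps $C^*({\rm Der}'(\Lambda V))\to\Lambda W$, equivalently DGL maps $\mathcal L(W)\to{\rm Der}'(\Lambda V)$, up to homotopy. The universal element $\sum_i \theta_i^*\otimes\theta_i$, taken over a homogeneous basis $\{\theta_i\}$ of ${\rm Der}'$ with dual elements $\theta_i^*$, yields the universal relative algebra $C^*({\rm Der}')\otimes\Lambda V$. We need ${\rm Der}'$ to be of finite type in each degree, which follows from $\dim V^i<\infty$. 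Combining with Step 1 gives $[B,\langle{\rm Der}'\rangle]\cong\{\text{fibrations over }B_{\Q}\text{ with fibre }X_{\Q}\}\cong[B,B{\rm aut}_1(X)_{\Q}]$, naturally in $B$. By Yoneda, $\langle{\rm Der}'(\Lambda V)\rangle\simeq B{\rm aut}_1(X)_{\Q}$. As a consistency check, this gives $\pi_{n+1}(B{\rm aut}_1X)\otimes\Q\cong H_n({\rm Der}(\Lambda V))$ for $n\geq1$, with the Samelson product corresponding to the bracket.

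\emph{Main obstacle.} The delicate point is making the naturality rigorous. One must check that homotopy of relative Sullivan algebras corresponds exactly to gauge equivalence; this requires the gauge group generated by ${\rm Der}_0$ to act compatibly with the truncation at degree $1$. One must also check that the completed tensor products behave well, which uses finite type together with $W=W^{\geq2}$ to guarantee convergence. A secondary issue is that Yoneda only determines an object up to equivalence within the category of simply-connected rational spaces of finite type. Both $\langle{\rm Der}'\rangle$ and $B{\rm aut}_1(X)_{\Q}$ must therefore be shown to lie in that category, and this follows from the finiteness of each ${\rm Der}'_k$.
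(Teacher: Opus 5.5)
The paper gives no proof of this statement (it is quoted from Sullivan), so your outline has to stand on its own. Its shape is the standard one: fibrations $\leftrightarrow$ Maurer--Cartan elements of $\Lambda^+W\widehat{\otimes}\Der'(\Lambda V)$ $\leftrightarrow$ maps into the realization of $\Der'(\Lambda V)$. Your Step~1 check that the $\Der_1$-coefficients of the twisting term are $D$-cycles is correct.

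\textbf{The gap.} Step~2 and the Yoneda step rest on a false claim: $\Der'(\Lambda V)$ is \emph{not} of finite type in general. Since $\Der_k(\Lambda V)\cong\prod_n\Hom\big(V^n,(\Lambda V)^{n-k}\big)$, the hypothesis $\dim V^i<\infty$ only makes each factor finite-dimensional. The product is infinite-dimensional as soon as infinitely many factors are nonzero. For example, for $X=\prod_{j\geq1}K(\Q,2j)$ one has $d=0$ and $\Der_2(\Lambda V)\cong\prod_{j\geq1}(\Lambda V)^{2j-2}$, an infinite product of nonzero spaces. Correspondingly $\pi_3(\B(X_\Q))\cong\prod_{j\geq1}H^{2j-2}(X;\Q)$ is infinite-dimensional too. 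So finite type of $\Der'(\Lambda V)$ is guaranteed when $V$ is finite-dimensional, i.e.\ $X$ is $\pi$-finite, but not in general.

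Outside the $\pi$-finite case, several things you use are unavailable:
\begin{itemize}
\item $C^*(\Der')$ is no longer a finite-type Sullivan algebra modelling $\langle\Der'\rangle$.
\item The universal element $\sum_i\theta_i^*\otimes\theta_i$ does not exist, and Maurer--Cartan elements are no longer in bijection with CDGA maps $C^*(\Der')\to\Lambda W$.
\item Neither $\langle\Der'\rangle$ nor $\B(X_\Q)$ need lie in the finite-type homotopy category in which you invoke Yoneda.
\end{itemize}
Your justification that $\B(X)_\Q$ lies in that category ``by finiteness of $\Der'_k$'' is circular in any case, since it presupposes the comparison being proved. This is not a side issue for the paper: Theorem~\ref{thm1} applies the statement, via Theorem~\ref{J}, to spaces $X$ that it then shows to be $\pi$-infinite. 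As written, your argument covers only $\pi$-finite $X$.

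\textbf{A repair along your own lines.} Step~1 needs no finiteness of $\Der'(\Lambda V)$, so taking $B=S^m$ first gives $\pi_m(\B(X_\Q))\cong H_{m-1}(\Der'(\Lambda V))$ for $m\geq 2$, with no circularity. When this is of finite type, as in every application in the paper, choose a finite-type DGL $L$ with a quasi-isomorphism $L\to\Der'(\Lambda V)$, e.g.\ a minimal Quillen model. Filtering by degree in $\Lambda W$ shows that $\Lambda^+W\widehat{\otimes}L\to\Lambda^+W\widehat{\otimes}\Der'(\Lambda V)$ is a filtered quasi-isomorphism of complete DGLs. By a Goldman--Millson type invariance theorem it is then a bijection on Maurer--Cartan moduli. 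Your Step~2 and Yoneda argument now run with $\langle L\rangle$, legitimately inside the finite-type category. For the statement in full generality you would have to leave the finite-type Sullivan setting and work on the Quillen side.

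\textbf{Two smaller corrections.}
\begin{itemize}
\item The gauge group is $\exp$ of the degree-zero part $\prod_{p\geq2}(\Lambda W)^p\otimes\Der_p(\Lambda V)$ of $\Lambda^+W\widehat{\otimes}\Der(\Lambda V)$, not something generated by $\Der_0$. It already lies in $\Lambda^+W\widehat{\otimes}\Der'(\Lambda V)$, as do Maurer--Cartan elements and homotopies with coefficients in $\Lambda^+W\otimes\Lambda(t,dt)$. So the compatibility with the truncation that you single out as the main obstacle is automatic.
\item The classification theorem produces $\B(X_\Q)$, not $\B(X)_\Q$. Identifying the two is a separate result (valid e.g.\ for finite complexes), and the realization problem in the paper concerns $\B(X_\Q)$ anyway.
\end{itemize}
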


Recall that the homotopy groups of the loop space $\Omega B{\rm aut}_1(X)$ admit a natural bilinear pairing $[-,-]$ called the \textit{Samelson product} (see \cite[Ch. \uppercase\expandafter{\romannumeral3}]{W}). In particular, we have the following theorem.

\begin{theorem}\label{J}
Let $X$ be a space
with Sullivan minimal model $(\Lambda V, d)$. Then there is an isomorphism of graded Lie algebras
$$\pi_{\ast}(\Omega B{\rm aut}_1(X)) \cong H_{\ast}({\rm Der}'(\Lambda V),D)$$
where the left-hand graded space has the Samelson product. 
\end{theorem}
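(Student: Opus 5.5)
Theorem \ref{J} is the standard consequence of Theorem \ref{liu}, and I will derive it from that model. Since $B{\rm aut}_1(X)$ is simply connected, $\pi_*(\Omega B{\rm aut}_1(X))\otimes\Q$ with the Samelson product is the rational homotopy Lie algebra $L_{B{\rm aut}_1(X)}$. For any DGL Lie model $(L,\partial)$ of a simply connected space $Y$, there is an isomorphism of graded Lie algebras $H_*(L,\partial)\cong \pi_*(\Omega Y)\otimes\Q$ (\cite[Thm.~21.? / Ch.~24]{F-H-T}). Applying this to $L=\Der'(\Lambda V)$ and $Y=B{\rm aut}_1(X)$ gives the stated isomorphism. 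Here $\pi_*$ is understood rationally; for $X$ of finite type, $\pi_*(\Omega B{\rm aut}_1(X))$ is a rational vector space after rationalization.

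The steps, in order, are as follows.

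First, I check that $(\Der'(\Lambda V),D)$ is a well-defined positively graded DGL. Three things need verifying:
\begin{enumerate}
\item $[d,-]$ squares to zero, because $d^2=0$ and $|d|$ is odd.
\item $D$ maps $\Der_2$ into the cycles $Z(\Der_1)$, since $D^2=0$.
\item Brackets of elements of degree $\ge 1$ stay in $\Der'$, and the bracket of two degree-one cycles lands in degree $2$.
\end{enumerate}
Homology in degree $1$ is then $Z(\Der_1)/D(\Der_2)$, and in degree $k>1$ it is $H_k(\Der(\Lambda V))$.

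Second, I invoke Theorem \ref{liu}. It says $\Der'(\Lambda V)$ is a Lie model, meaning its Quillen-dual realization, namely the Chevalley--Eilenberg cochains $C^*(\Der'(\Lambda V))$, is a Sullivan model of $B{\rm aut}_1(X)$. From this, the standard theorem identifies the homology Lie algebra of any Lie model with the homotopy Lie algebra. The map goes through the minimal model: the quadratic part of the minimal Sullivan model of $C^*(L)$ is dual to the bracket on $H(L)$, and on the space side the quadratic part of the differential is dual to the Whitehead product, which corresponds to the Samelson product on $\pi_*(\Omega Y)$ up to the usual sign and degree shift.

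The main obstacle is the sign and degree bookkeeping in this last identification. It requires checking that the bracket convention $[\theta_1,\theta_2]=\theta_1\theta_2-(-1)^{|\theta_1||\theta_2|}\theta_2\theta_1$ on derivations matches the Samelson product rather than its negative. Any sign discrepancy can be absorbed by the Lie algebra automorphism $x\mapsto -x$, so the isomorphism of graded Lie algebras survives. For that part I would simply cite \cite[Ch.~21]{F-H-T} and \cite{S}.
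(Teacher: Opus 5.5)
Your proposal is correct and follows the paper's route. The paper gives no separate argument for Theorem \ref{J}; it simply recalls it as the standard consequence of Sullivan's Theorem \ref{liu} together with the identification of the homology of any Lie model of a simply connected space $Y$ with $\pi_*(\Omega Y)\otimes\Q$ under the Samelson product, and your rational reading of $\pi_*$ is the intended one, as the use of $\pi_k(\Omega S^{2n})\otimes\Q$ in the proof of Theorem \ref{thm1} confirms. Your aside that any sign discrepancy is absorbed by $x\mapsto -x$ strictly covers only a uniform sign, but nothing rests on it, since the cited Lie-model theorem already gives an isomorphism with the Samelson bracket.
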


In the previous work of Lupton and Smith \cite{J-B, 2019L-S}, they studied the realization problem for $\pi$-finite spaces, which depends heavily on the following theorem.

\begin{theorem}\label{pf}\rm{(\cite[Prop. 2.2]{J-B})}
Suppose $X$ is $\pi$-finite with
$$\pi_i(X) =\left\{
\begin{array}{ll}
\mathbb{Q}^r\ {\rm some} \ r \geq 1, & i = N,\\
0, &i > N.
\end{array}
\right.$$
Then we have
$$\pi_i(B{\rm aut}_{1}(X)) =\left\{
\begin{array}{ll}
\mathbb{Q}^r,& i = N+1,\\
0, &i > N+1.
\end{array}
\right.$$
\end{theorem}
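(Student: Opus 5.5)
We prove the statement with Sullivan's model (Theorem \ref{liu}) and the identification of Theorem \ref{J}, namely $\pi_i(\B(X))\otimes\Q\cong\pi_{i-1}(\Omega\B(X))\otimes\Q\cong H_{i-1}(\Der'(\Lambda V),D)$. This turns both claims into statements about the homology of derivations of the minimal model $(\Lambda V,d)$ of $X$. Since $X$ is $\pi$-finite with rational homology of finite type, $V=\bigoplus_{i\le N}V^i$ is finite-dimensional, with $V^N\cong\Hom(\pi_N(X),\Q)$ of dimension $r\ge1$ and $V^i=0$ for $i>N$.

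\emph{Vanishing above $N+1$.} A derivation is determined by its values on $V$. If it reduces degree by $k>N$, it sends every $v\in V$ (with $|v|\le N$) to an element of negative degree, which is zero. So $\Der_k(\Lambda V)=0$ for $k>N$. The Lie model therefore vanishes in degrees $>N$, so its homology vanishes there too, and hence $\pi_i(\B(X))\otimes\Q=0$ for $i>N+1$.

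\emph{Degree $N+1$.} We need $H_N(\Der'(\Lambda V),D)\cong\Q^r$. A derivation $\theta$ of degree $N$ can be nonzero only on $V^N$, where its values lie in $(\Lambda V)^0=\Q$. So $\Der_N(\Lambda V)\cong\Hom(V^N,\Q)\cong\Q^r$, spanned by the dual derivations $\theta_{v^*}$.

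\emph{Each $\theta$ in degree $N$ is a cycle.} Write $D\theta=d\theta-(-1)^N\theta d$. First, $d\theta=0$ on generators because $\theta(V)\subset\Q$ and $d(1)=0$. Second, $D\theta$ has degree $N-1$, so it can be nonzero only on $V^{N-1}\oplus V^N$; we check $\theta d$ on each piece.
\begin{itemize}
\item For $w\in V^{N-1}$: $dw$ has degree $N$ and is decomposable, so it lies in $\Lambda^{\ge2}$ of generators of degree $<N$. Since $\theta$ kills those generators, $\theta(dw)=0$.
\item For $v\in V^N$: $dv\in\Lambda^{\ge2}V$ of degree $N+1$. If a monomial in $dv$ contained a factor from $V^N$, the remaining factor would have degree $1$, which is impossible because $V^1=0$. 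So $dv$ involves only lower generators and $\theta(dv)=0$.
\end{itemize}
Hence every element of $\Der_N(\Lambda V)$ is a cycle.

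\emph{No nonzero boundaries in degree $N$.} These would be $D\eta$ with $\eta\in\Der_{N+1}(\Lambda V)=0$.

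One caveat is that $\Der'$ is truncated in degree $1$. If $N=1$ this matters, but $X$ is simply connected, so $N\ge2$ and $\Der'_N=\Der_N$. Also $\Der'_{N+1}=\Der_{N+1}=0$. We conclude $H_N\cong\Q^r$, and therefore $\pi_{N+1}(\B(X))\otimes\Q\cong\Q^r$.

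The main (mild) obstacle is the cycle verification above, specifically the use of $V^1=0$, i.e.\ simple connectivity together with minimality, to guarantee that $dv$ contains no top-degree generator. Everything else is degree bookkeeping.
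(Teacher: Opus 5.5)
Your proof is correct. The paper does not prove this statement itself; it imports it from Lupton--Smith \cite[Prop.~2.2]{J-B}. Your degree count in Sullivan's derivation model is the standard argument for it: $\Der_k(\Lambda V)=0$ for $k>N$, $\Der_N(\Lambda V)\cong\Hom(V^N,\Q)$ consists entirely of cycles, there are no boundaries in degree $N$, and $N\geq 2$ makes the truncation in $\Der'$ harmless. This also matches how the paper uses the result in the proofs of Theorems~\ref{thm2} and~\ref{thm3}, where $H_N(\Der'(\Lambda V),D)$ is spanned by the dual derivations $y_i^\ast$ of the top generators.

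One small simplification: for $v\in V^N$, the value $D\theta(v)$ lies in $(\Lambda V)^1=0$, so that case needs no monomial analysis.
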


Finally, we recall some basic facts about Gottlieb elements, which were first introduced by Gottlieb \cite{G} with a different name. A homotopy class $\alpha\in \pi_n(X)$ is called a \textit{Gottlieb element for a space $X$} if the map $\alpha \vee 1_X:S^n\vee X\rightarrow X$ can be extended to a map $\varphi_{\alpha}:S^n\times X\rightarrow X$ \cite[p.377]{F-H-T}. The Gottlieb elements for $X$ form a subgroup $G_\ast(X)\subset \pi_\ast(X)$. The element in $G_\ast(X_\Q)$ is called the \textit{rational Gottlieb element for $X$}. A \textit{Gottlieb element for a Sullivan minimal algebra $(\Lambda V,d)$} is a linear map $f:V^n\rightarrow \Q$ which can be extended to a cycle $\theta\in \Der(\Lambda V)$, i.e., $d\theta =(-1)^n\theta d$ \cite[p.392]{F-H-T}. The Gottlieb elements for $(\Lambda V,d)$ form a graded subspace $G_\ast(\Lambda V,d)\subset \Hom(V,\Q)$. About Gottlieb elements defined for spaces and Sullivan models, we have the following result.
\begin{theorem}\cite[Prop. 29.8]{F-H-T}\label{GEtg}
    Suppose that $(\Lambda V,d)$ is a Sullivan minimal model for a simply-connected space $X$ with rational homology of finite type. Then there is an isomorphism
    $$G_\ast(\Lambda V,d) \xrightarrow{\cong} G_\ast(X_\Q).$$
\end{theorem}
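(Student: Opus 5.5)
The plan is to exhibit two maps $G_\ast(\Lambda V,d)\rightleftarrows G_\ast(X_\Q)$ and check they are mutually inverse. Both sides sit inside $\Hom(V,\Q)\cong\pi_\ast(X_\Q)$, so it is enough to show that a linear map $f:V^n\to\Q$ extends to a derivation cycle exactly when the corresponding class $\alpha\in\pi_n(X_\Q)$ is Gottlieb.

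\textbf{Step 1: a Gottlieb element gives a derivation cycle.} Let $\alpha$ be Gottlieb, so $\alpha\vee 1$ extends to $\varphi_\alpha:S^n\times X_\Q\to X_\Q$. Model $S^n$ by the CDGA $A=\Lambda(e)/(e^2)$ with $|e|=n$; formality of $S^n$ lets us use it in place of its minimal model in both parities. Then $A\otimes\Lambda V$ models $S^n\times X_\Q$, and $\varphi_\alpha$ is represented by a CDGA map
$$\Phi:\Lambda V\to A\otimes \Lambda V .$$
Write $\Phi(x)=1\otimes \phi_0(x)+e\otimes \theta(x)$.
\begin{itemize}
\item Restricting to $X$ gives $\phi_0\simeq \mathrm{id}$. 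Since minimal models are unique up to isomorphism, we may alter $\Phi$ by an automorphism to get $\phi_0=\mathrm{id}$.
\item Multiplicativity of $\Phi$, together with $e^2=0$, makes $\theta$ a derivation of degree $-n$.
\item The identity $d\Phi=\Phi d$ yields $d\theta=(-1)^n\theta d$, so $\theta$ is a cycle.
\item Restricting to $S^n$ shows that the linear part of $\theta$ on $V^n$ is $f$.
\end{itemize}

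\textbf{Step 2: a derivation cycle gives a Gottlieb element.} Conversely, given a cycle $\theta$ extending $f$, define
$$\Phi(x)=1\otimes x+e\otimes\theta(x).$$
The cycle condition and the derivation property are exactly what is needed for $\Phi$ to be a CDGA morphism. Realizing $\Phi$ gives a map $S^n\times X_\Q\to X_\Q$ extending $\alpha\vee1$. Here we use the Sullivan realization correspondence between CDGA maps and homotopy classes of maps into rational spaces. So $\alpha$ is Gottlieb. The two constructions are compatible with the identification $\Hom(V,\Q)\cong\pi_\ast(X_\Q)$, and they respect the group structure because sums of cycle extensions are again cycle extensions. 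This gives the claimed isomorphism.

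\textbf{Main obstacle.} The delicate step is in Step 1: arranging $\phi_0=\mathrm{id}$ exactly rather than only up to homotopy, and making sure the homotopy to the identity does not destroy the derivation identity.

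The approach I would try first is to use the relative Sullivan model of the product and lift the homotopy $\phi_0\simeq\mathrm{id}$ through $A\otimes\Lambda V\to\Lambda V$. Minimality makes this work: a homotopic self-map of a minimal model inducing the identity on the indecomposables $V$ is an automorphism. Precomposing $\Phi$ with its inverse then keeps $\Phi$ a CDGA map and keeps the restriction to $S^n$ unchanged.

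If the bookkeeping becomes heavy, I would fall back on the standard description of homotopy classes $[S^n\times X_\Q,X_\Q]$ via $\Hom$ into $A\otimes\Lambda V$ modulo homotopy. That description gives the correspondence directly, up to adjusting $\theta$ by a boundary $D(\eta)$, and such an adjustment does not affect the linear part on $V^n$.
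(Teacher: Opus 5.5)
Your proposal is correct. It is essentially the standard argument behind \cite[Prop. 29.8]{F-H-T}, which the paper cites without proof: represent $\varphi_\alpha$ by a morphism $\Lambda V\to H^\ast(S^n)\otimes\Lambda V$, normalize its $X$-component to the identity by precomposing with that component's inverse, read off the derivation cycle from the $e$-component, and reverse the construction for the converse. The only slip is the justification in your first bullet: the normalization works not because minimal models are unique, but because of the fact you give later, namely that a self-map of a minimal Sullivan algebra homotopic to the identity induces the identity on $V$ and is therefore an automorphism.
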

\noindent In other words, Gottlieb elements of degree $k$ for a rational space $X$ can be identified with the nonzero restrictions on $V^k$ of cycles in $\Der^\prime_k(\Lambda V)$, where $(\Lambda V,d)$ is a Sullivan minimal model for $X$ (also see \cite[Thm.3.5]{2007J-B}).

\section{Structural results on derivation cycles}

Let $(\Lambda V,d)$ be a Sullivan minimal algebra with $V=V^{\geq2}$ of finite type. Inductively, define an increasing filtration $\{V(i)\}_{i\geq 0}$ of $V$ by $$V(0)=\ker d\cap V,\qquad V(i+1)=d^{-1}(\Lambda V(i))\cap V.$$
This filtration is exhaustive by the definition of Sullivan minimal model. Moreover, we can write the filtration as a direct sum $V=\oplus_{i\geq 0} V_i$
$$V_0=V(0),\ \ V_0\oplus \dots \oplus V_i=V(i). $$
Set $W=V_0\oplus V_1$, denote the suspension of $W$ by $\overline{W}$. We can define a CDGA 
$$(\Lambda \overline{W}\otimes \Lambda W,d_W),$$ 
and the differential is given by $sd_W+d_Ws=\mathrm{id}$, where $s$ is a derivation given by desuspension $s\overline{v}=v$, $sv=0$ for $\overline{v}\in \overline{W}$, $ v\in W$. 
In particular, we have 
$$d_W\overline{v}=v-sdv,\ \  d_Wv=dv\ \  \textrm{ for } \overline{v}\in \overline{W}, \ v\in W.$$
Note that $(\Lambda \overline{W}\otimes \Lambda W,d_W)$ is an acyclic CDGA \cite[Proposition 14.13]{F-H-T}. For convenience, we call $(\Lambda \overline{W}\otimes \Lambda W,d_W)$ the {\it acyclic closure} of $(\Lambda W,d)$  \cite[p. 192]{F-H-T}. 

Recall that $L_l(x)$ denotes the summand of $x \in \Lambda^+V$ which has wordlength $l$, and $L_l(\theta)$ denotes the summand of $\theta\in \Der(\Lambda V)$ which increases wordlenth by $l$. 
\begin{definition}
    For an integer $k$, we say {\it $\Der^\prime(\Lambda V)$ satisfies the condition $\mathrm{(G_k)}$}, if any cycle $ \theta \in \Der^\prime_k(\Lambda V)$ with $L_{-1}(\theta)=0$ is an exact cycle. 
\end{definition}
\noindent Note that $H_k(\Der^\prime(\Lambda V))=0$ implies that $\Der^\prime(\Lambda V)$ satisfies condition $\mathrm{(G_k)}$. 

With notation as above, we have the following lemmas. 
\begin{lemma}\label{lm1}
    Suppose that $\theta\in \Der^\prime(\Lambda V)$ is a cycle of degree $n$ with $L_{-1}(\theta)\neq 0$, and $\Der^\prime(\Lambda V)$ satisfies the condition $\mathrm{(G_k)}$ for $n-m\leq k\leq n-2$, where $m$ is an integer satisfying $2\leq m\leq n-1$. Then there exists a right $\Lambda W$-module map of degree $n$
    $$ \Phi_{\theta}: (\Lambda \overline{W})^{\leq m-1}\otimes \Lambda W  \rightarrow \Der(\Lambda V) $$
    with 
    $$\Phi_{\theta}(1)=\theta, \ \ D\Phi_{\theta}(x)=(-1)^n\Phi_{\theta}(d_Wx), \ \ \textrm{ for } \ x\in (\Lambda \overline{W})^{\leq m-1}\otimes \Lambda W. $$
    Moreover, the restriction of $L_{-1}\circ\Phi_{\theta}$ on $(\Lambda \overline{W})^{\leq m-1}$ is injective. 
    
\end{lemma}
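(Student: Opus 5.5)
We construct $\Phi_\theta$ on $\Lambda\overline W$ by induction on the degree of $\overline{W}$-monomials, and extend to $(\Lambda\overline W)^{\leq m-1}\otimes\Lambda W$ by the right module structure: $\Phi_\theta(x\cdot w)=\Phi_\theta(x)\cdot w$ for $x\in\Lambda\overline W$, $w\in\Lambda W$. Put $\Phi_\theta(1)=\theta$. Since $D(\theta w)=D(\theta)w\pm\theta\, dw$ and $d_W w=dw$ on $\Lambda W$, the chain condition reduces to checking $D\Phi_\theta(x)=(-1)^n\Phi_\theta(d_W x)$ for $x$ a basis monomial of $(\Lambda\overline W)^{\leq m-1}$. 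Here $d_Wx$ lies in $(\Lambda\overline W)^{<|x|}\otimes\Lambda W$, plus terms of the same $\overline W$-degree (from $d_W\overline v=v-sdv$ with $sdv$ possibly involving other $\overline{W}$ generators of lower degree). I would therefore order monomials by their total $\overline W$-degree and then by wordlength in $\overline W$, so that $\Phi_\theta(d_Wx)$ is already defined when we treat $x$.

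For the inductive step, fix a monomial $x$ of degree $j\le m-1$ and set $c=\Phi_\theta(d_Wx)$, a derivation of degree $n-j-1$. The induction hypothesis together with $d_W^2=0$ gives $Dc=(-1)^n\Phi_\theta(d_Wd_Wx)=0$, so $c$ is a cycle. We want $\Phi_\theta(x)$ with $D\Phi_\theta(x)=(-1)^n c$, and we will use the condition $\mathrm{(G_k)}$ with $k=n-j-1$. Since $1\le j\le m-1$, we have $n-m\le k\le n-2$, which is exactly the allowed range; moreover $k\ge 1$ because $m\le n-1$, so $c\in\Der'_k$ (for $k=1$ we need $c$ to be a cycle, which holds). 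The key point is to show $L_{-1}(c)=0$; once this is known, $\mathrm{(G_k)}$ makes $c$ exact and we choose $\Phi_\theta(x)$ as a primitive.

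To prove $L_{-1}(c)=0$, note that $L_{-1}$ of a product $\Phi(y)\cdot w$ vanishes when $w\in\Lambda^{\ge1}W$, because the module action multiplies the values by $w$. Also, since $D$ raises wordlength by at least one, $L_{-1}(D\Phi)=0$. By induction I would maintain the stronger statement that $L_{-1}\Phi_\theta(y)$ depends only on the image of $y$ in $\Lambda\overline W$ modulo $\Lambda^+W$. Writing $d_Wx=\sum \pm x'\,v+(\text{terms in }\Lambda\overline W\otimes\Lambda^{\geq 1}W\text{ from }sdv)$, every summand of $d_Wx$ carries a positive-wordlength factor from $W$. Hence $L_{-1}(c)=0$, provided that terms like $\Phi(x')\cdot v$ with $v\in W$ contribute no $L_{-1}$-part. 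They do not, since $L_{-1}(\Phi(x')v)=L_{-1}(\Phi(x'))\cdot v$ lands in $\Lambda^{\ge1}V$. This is the step I expect to be the main obstacle. The difficulty is bookkeeping of signs and of exactly which terms of $d_W x$ are already defined, especially the contributions of $sdv$ for $v\in V_1$, which involve $\overline{V_0}$.

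For injectivity of $L_{-1}\circ\Phi_\theta$ on $(\Lambda\overline W)^{\le m-1}$, compare wordlength $-1$ parts. From $D\Phi(x)=\pm\Phi(d_Wx)$, take the $L_0$ component. Since $L_0(D\eta)=[d_1,L_{-1}\eta]+[d_2\text{-type terms}]$ and $d_1=0$ by minimality, we get $L_0(D\Phi(x))=[d_2,L_{-1}\Phi(x)]$. Meanwhile $L_0(\Phi(x'v))$ equals $L_{-1}(\Phi(x'))\cdot v$. Thus $x\mapsto L_{-1}\Phi_\theta(x)$, viewed in $\Hom(V,\Q)$, intertwines the linear part of $d_W$, which is $\overline v\mapsto v$, with the map $\phi\mapsto[d_2,\phi]$. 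I would argue by induction on degree, using the starting datum $L_{-1}(\theta)\neq0$ and the fact that the linear part of $d_W$ is injective on $\Lambda^+\overline W$ (acyclicity). If $\sum\lambda_x L_{-1}\Phi(x)=0$, then applying this $L_0$-relation gives $\sum\lambda_x L_{-1}(\Phi(x'))\cdot v=0$ for the derived terms, which yields a relation in lower degree; that lower relation forces $\lambda=0$ by the induction hypothesis, ending at $L_{-1}(\theta)\ne0$.
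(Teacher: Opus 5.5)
Your construction of $\Phi_\theta$ is sound and is essentially the paper's. Inducting on $\overline W$-degree works; in fact $d_W$ strictly lowers $\overline W$-degree, because $dv$ is decomposable with factors of degree $\geq 2$, so no same-degree terms occur.

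The gap is in the injectivity step. You assert that $x\mapsto L_{-1}\Phi_\theta(x)$ intertwines $\overline v\mapsto v$ with $[d_1,-]$, where $d_1$ is the quadratic part of $d$ in the paper's indexing (your $d_2$). But $L_0\Phi_\theta(d_Wx)$ is obtained by applying $y\cdot w\mapsto L_{-1}\Phi_\theta(y)\cdot w$ to the \emph{whole} component of $d_Wx$ in $\Lambda\overline W\otimes W$. That component contains two kinds of terms: those where some $\overline v$ is replaced by $v$, and those where $\overline v$ is replaced by $-s(d_1v)\in\overline W\otimes W$.

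For example, suppose $dc=ab$ with $a,b\in V_0$ and $c\in V_1$. Then $d_W\overline c=c-\overline a\,b\pm a\,\overline b$, so $[d_1,L_{-1}\Phi_\theta(\overline c)]=\pm\big(L_{-1}(\theta)\cdot c-L_{-1}\Phi_\theta(\overline a)\cdot b\pm L_{-1}\Phi_\theta(\overline b)\cdot a\big)$, not $\pm L_{-1}(\theta)\cdot c$.

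So the map actually intertwined with $[d_1,-]$ is $\overline v\mapsto v-s(d_1v)$, i.e. the differential $d_{1,W}$ of the acyclic closure of $(\Lambda W,d_1)$. Your relation ``$\sum\lambda_xL_{-1}(\Phi(x'))\cdot v=0$'' silently drops the $s(d_1v)$-terms. What the induction hypothesis really yields, using that $\Hom(V,\Q)\otimes W\to\Hom(V,W)$ is injective, is $d_{1,W}z=0$ for $z=\sum\lambda_x x\in(\Lambda\overline W)^{k+1}$. Injectivity of $\overline v\mapsto v$ on $\Lambda^+\overline W$ is not, by itself, the statement needed to conclude $z=0$.

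There are two ways to close this.

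\emph{The paper's route.} $(\Lambda\overline W\otimes\Lambda W,d_{1,W})$ is acyclic, and $d_{1,W}$ raises $W$-wordlength by exactly one. Hence a $d_{1,W}$-cycle in $(\Lambda\overline W)^{k+1}$ is a boundary lying in $W$-wordlength zero, so it vanishes.

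\emph{Your own tool, suitably refined.} On $\Lambda\overline W$ write $d_{1,W}=\delta+\tau$. Here $\delta$, induced by $\overline v\mapsto v$, lowers $\overline W$-wordlength by one. The map $\tau$, induced by $\overline v\mapsto -s(d_1v)$, preserves it. Let $z_r\neq 0$ be the component of $z$ of lowest $\overline W$-wordlength $r\geq 1$. Then the component of $d_{1,W}z$ of $\overline W$-wordlength $r-1$ is exactly $\delta z_r$. This is nonzero by the injectivity of $\delta$ on $\Lambda^+\overline W$ that you already invoked.

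With either repair, your induction down to $L_{-1}(\theta)\neq 0$ goes through as in the paper.
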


\begin{proof}
    To define the right $\Lambda W$-module map $\Phi_{\theta}$, it is sufficient to give the definition of $\Phi_{\theta}$ on $(\Lambda \overline{W})^{\leq m-1}$. Recall that $W=V_0\oplus V_1$, and thus $\overline{W}=\overline{V_0}\oplus \overline{V_1}$. We first define $\Phi_{\theta}$ on $(\Lambda \overline{V_0})^{\leq m-1}\otimes \Lambda W$ by induction on wordlength. 

    Choose a homogeneous basis $\{v_{0,i}\}_{i\in I}$ for $V_0^{\leq m}$. Recall that $\Der^\prime(\Lambda V)$ satisfies condition $\mathrm{(G_k)}$ for $n-m\leq k\leq n-2$. Thus for $i\in I$, since $(-1)^n\Phi_{\theta}(v_{0,i})$ is a cycle with 
    $$L_{-1}\big((-1)^n\Phi_{\theta}(v_{0,i})\big)= L_{-1}\big((-1)^n\theta v_{0,i}\big)=0$$ 
    and 
    $$1\leq n-m\leq   |(-1)^n\theta v_{0,i}|=n-|v_{0,i}|\leq n-2,$$
    we conclude that $(-1)^n\theta v_{0,i}$ is an exact cycle according to the definition of condition $\mathrm{(G_k)}$. Thus we can choose $\Phi_{\theta}(\overline{v_{0,i}})$ such that 
    $$D\Phi_{\theta}(\overline{v_{0,i}}) =(-1)^n\Phi_{\theta}(v_{0,i})=(-1)^n\Phi_{\theta}(d_W \overline{v_{0,i}}).$$ 
    Thus, we define $\Phi_{\theta}$ on $ \overline{V_0}^{\leq m-1}\otimes \Lambda W$. 

    Suppose that we have defined $\Phi_{\theta}$ on $(\Lambda^{\leq l-1} \overline{V_0})^{\leq m-1}\otimes \Lambda W$. For a basis element $\overline{v_{0,i_1}} \ \overline{v_{0,i_2}}  \dots \overline{v_{0,i_{l}}}$ of $(\Lambda^{l} \overline{V_0})^{\leq m-1}$, we have 
    \begin{align*}
        \begin{split}
            D \Big( (-1)^n\Phi_{\theta}\big( d_W (\overline{v_{0,i_1}} \dots \overline{v_{0,i_{l}}}) \big) \Big) &=  \sum_{s=1}^{l} (-1)^{n+\delta_s} D\Phi_{\theta}( v_{0,i_s} \overline{v_{0,i_1}} \dots \widehat{\overline{v_{0,i_s}}} \dots \overline{v_{0,i_{l}}} )  \\
            &=\sum_{s=1}^{l} (-1)^{\delta_s} \Phi_{\theta}( d_Wv_{0,i_s}\cdot \overline{v_{0,i_1}} \dots \widehat{\overline{v_{0,i_s}}} \dots \overline{v_{0,i_{l}}} ) \\
            &=\Phi_{\theta}\big( d_W (\sum_{s=1}^{l} (-1)^{\delta_s} v_{0,i_s} \overline{v_{0,i_1}} \dots \widehat{\overline{v_{0,i_s}}} \dots \overline{v_{0,i_{l}}} )\big) \\
            &=\Phi_{\theta}\big( d_W^2 (\overline{v_{0,i_1}} \dots \overline{v_{0,i_{l}}}) \big)  \\
            &=0 ,
        \end{split}
    \end{align*}
    where $\delta_s$ denotes the Koszul convention. Similarly, we can choose $\Phi_{\theta} (\overline{v_{0,i_1}} \dots \overline{v_{0,i_{l}}})$ such that 
    $$D\Phi_{\theta} (\overline{v_{0,i_1}} \dots \overline{v_{0,i_{l}}})=  (-1)^n\Phi_{\theta}\big( d_W (\overline{v_{0,i_1}} \dots \overline{v_{0,i_{l}}}) \big) . $$
    By induction, we can define $\Phi_{\theta}$ on $(\Lambda \overline{V_0})^{\leq m-1}\otimes \Lambda W$. 

    Then by direct computations, we note that 
    $$d_W(\Lambda^l \overline{V_1})\subset (\Lambda^{l-1} \overline{V_1} \otimes W )\oplus (\overline{V_0}\otimes \Lambda^{l-1} \overline{V_1} \otimes \Lambda W   )$$
    Thus by similar arguments as above, we can also define $\Phi_{\theta}$ on 
    $$(\Lambda \overline{V_0} \otimes \Lambda \overline{V_1})^{\leq m-1} \otimes \Lambda W=(\Lambda \overline{W})^{\leq m-1}\otimes \Lambda W$$ 
    by induction on wordlength of $\overline{V_1}$, which completes the definition of $\Phi_{\theta}$. According to the definition, $\Phi_{\theta}$ commutes with differentials. 

    Finally, we show the injectivity of $L_{-1}\Phi_{\theta}|_{(\Lambda \overline{W})^{\leq m-1}}$ by induction on the degree. Since $\Phi_{\theta}(1)=\theta$ and $L_{-1}(\theta)\neq 0$, $L_{-1}\Phi_{\theta}$ is injective on $(\Lambda \overline{W})^0$. Suppose that we have shown $L_{-1}\Phi_{\theta}$ is injective on $(\Lambda \overline{W})^{\leq k}$. For nonzero $ \overline{v}\in (\Lambda \overline{W})^{k+1}$, in order to show $ L_{-1}\Phi_{\theta}(\overline{v})\neq 0 $, it is sufficient to prove
    $$L_{0}\big(D \Phi_{\theta}(\overline{v}) \big)=L_{0} \Phi_{\theta} (d_W\overline{v})\neq 0. $$
    Since $L_{0} \Phi_{\theta}\big( (\Lambda \overline{W})^{\leq m-1}\otimes \Lambda^{\geq 2} W\big)=0$ by direct computations of wordlength, it follows that $L_{0} \Phi_{\theta} d_W$ coincides with the composition  
    $$(\Lambda \overline{W})^{k+1} \xrightarrow{\, d_W \, } (\Lambda \overline{W})^{\leq k} \otimes \Lambda^+ W \xrightarrow{\, \mathrm{projection}  \, } (\Lambda \overline{W})^{\leq k} \otimes  W \xrightarrow{\, L_{0} \Phi_{\theta} \, } \Der(\Lambda V). $$ 
    By the induction hypothesis, we conclude that the restriction $L_{0} \Phi_{\theta}$ on $(\Lambda \overline{W})^{\leq k} \otimes  W$ is injective. Thus, it is sufficient to show the composition
    \begin{equation}\label{d}
        (\Lambda \overline{W})^{k+1} \xrightarrow{\ d_W \ } (\Lambda \overline{W})^{\leq k} \otimes \Lambda^+ W \xrightarrow{\ \mathrm{projection}  \ } (\Lambda \overline{W})^{\leq k} \otimes  W
    \end{equation}
    is injective. 

    Let $(\Lambda \overline{W}\otimes \Lambda W,d_{1,W})$ be the acyclic closure of $(\Lambda W,d_1)$, where $d_1$ is the quadratic part of the differential $d$. By direct computations, we derive that the restriction $d_{1,W}|_{(\Lambda \overline{W})^{k+1}}$ is exactly the composition (\ref{d}). Thus we only need to show 
    $$\ker d_{1,W}\cap (\Lambda \overline{W})^{k+1}=0. $$
    Since $(\Lambda \overline{W}\otimes \Lambda W,d_{1,W})$ is an acyclic CDGA, for any $\overline{v}\in \ker d_{1,W}\cap (\Lambda \overline{W})^{k+1}$, we conclude that $\overline{v}$ is an exact cycle. On the other hand, $d_{1,W}$ increases the wordlength of $W$ by one, which implies $\overline{v}\in \Lambda \overline{W}\otimes \Lambda^{\geq 1}W$. Therefore, we conclude that $\overline{v}=0$, and the composition (\ref{d}) is injective. By induction, we finally show the injectivity of $L_{-1}\Phi_{\theta}|_{(\Lambda \overline{W})^{\leq m-1}}$. This completes the proof of Lemma \ref{lm1}.

\end{proof}

\begin{lemma}\label{lm2}
    Suppose that $\theta_1 ,\theta_2\in \Der^\prime(\Lambda V)$ are cycles  with $L_{-1}(\theta_1),L_{-1}(\theta_2)$ linearly independent, and $\Der^\prime(\Lambda V)$ satisfies the condition $\mathrm{(G_k)}$ for $1\leq k\leq |\theta_1|-2$ and $|\theta_2|-|\theta_1|\leq k\leq |\theta_2|-2$. Then we have $|\theta_1|=|\theta_2|$ and $V^{<|\theta_1|}=0$. 
\end{lemma}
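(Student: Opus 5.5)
Throughout, write $n_1=|\theta_1|$ and $n_2=|\theta_2|$. The hypothesis on $k$ only makes sense when $n_1\le n_2$, so we are implicitly in that case. The plan is to apply Lemma \ref{lm1} to both cycles with carefully chosen $m$. Then $\Phi_{\theta_1}$ and $\Phi_{\theta_2}$ become two "dual" maps on $\Lambda\overline{W}$ whose $L_{-1}$-parts land in $\Hom(V,\Q)$, and a degree comparison between them forces $n_1=n_2$ and kills $V^{<n_1}$. The facts used are:
\begin{itemize}
\item[(a)] $L_{-1}$ of a derivation of degree $j$ is a functional on $V^{j}$.
\item[(b)] By minimality, the lowest-degree generators lie in $V_0\subset W$.
\end{itemize}

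\textbf{Step 1: the two module maps.} For $\theta_1$ I take $m=n_1-1$, so the range $n_1-m\le k\le n_1-2$ is exactly $1\le k\le n_1-2$. This gives $\Phi_{\theta_1}$ on $(\Lambda\overline{W})^{\le n_1-2}\otimes\Lambda W$ with $L_{-1}\Phi_{\theta_1}$ injective on $(\Lambda\overline W)^{\le n_1-2}$. If $n_1\le 2$, this step is vacuous and I handle it separately. For $\theta_2$ I take $m=n_1$, so the range is $n_2-n_1\le k\le n_2-2$, which is exactly the second hypothesis. This gives $\Phi_{\theta_2}$ on $(\Lambda\overline W)^{\le n_1-1}\otimes\Lambda W$. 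Both maps are chain maps up to the sign $(-1)^{n_i}$.

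\textbf{Step 2: show $V^{<n_1}=0$.} Suppose $p<n_1$ is the least degree with $V^p\neq0$, and pick $0\ne v\in V^p$. Then $v\in V_0$, so $\overline v\in(\Lambda\overline W)^{p-1}$ with $p-1\le n_1-2$. By injectivity, $\eta:=\Phi_{\theta_1}(\overline v)$ has $L_{-1}(\eta)\ne0$ in degree $n_1-p+1$, and $D\eta=\pm\theta_1 v$. I would then try to contradict the linear independence of $L_{-1}(\theta_1)$ and $L_{-1}(\theta_2)$ by evaluating on the generator $v$:
\begin{itemize}
\item First, use the bound $\dim(\Lambda\overline W)^j\le\dim V^{n_i-j}$ coming from injectivity of $L_{-1}\Phi_{\theta_i}$.
\item Second, if $p-1$ is even, the powers $\overline v^{\,j}$ are all nonzero, and iterating gives an infinite chain. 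This chain should eventually produce a derivation whose $L_{-1}$-part is a nonzero functional on some $V^{q}$ with $q<p$, contradicting minimality of $p$.
\end{itemize}
If the pure dimension count is not decisive, the refinement I would try is to run the inductive construction of Lemma \ref{lm1} simultaneously for $\theta_1$ and $\theta_2$. Concretely, I would build a single module map from $(\Lambda\overline W)\otimes(\Q\theta_1\oplus\Q\theta_2)$ and prove injectivity of $L_{-1}$ on the direct sum by the same acyclic-closure argument with $d_{1,W}$. Linear independence of $L_{-1}(\theta_1)$ and $L_{-1}(\theta_2)$ supplies the base case.

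\textbf{Step 3: show $n_1=n_2$.} Assume $n_1<n_2$. Choose $w\in V^{n_1}$ with $L_{-1}(\theta_1)(w)\ne0$. By Step 2, $V^{n_1}$ is the lowest nonzero degree, so $w\in V_0$ and $\overline w\in(\Lambda\overline W)^{n_1-1}$, which lies in the domain of $\Phi_{\theta_2}$. The derivation $\Phi_{\theta_2}(\overline w)$ has degree $n_2-n_1+1\ge2$ and satisfies $D\Phi_{\theta_2}(\overline w)=\pm\theta_2 w$. Moreover, $L_{-1}\Phi_{\theta_2}(\overline w)\ne0$ is a functional on $V^{n_2-n_1+1}$. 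Since $2\le n_2-n_1+1$, Step 2 forces $n_2-n_1+1\ge n_1$. I then want to compare it with $L_{-1}\Phi_{\theta_1}$ through the combined injectivity from Step 2 to get a contradiction.

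\textbf{Main obstacle.} I expect the hardest step to be the combined injectivity, that is, showing that the images of $\Phi_{\theta_1}$ and $\Phi_{\theta_2}$ cannot overlap modulo $L_{-1}$. The single-cycle argument of Lemma \ref{lm1} only gives injectivity for each $\theta_i$ separately. Extending the induction on degree, where the acyclicity of $(\Lambda\overline W\otimes\Lambda W,d_{1,W})$ is used, to the two-generator module is where linear independence must enter. I would attempt it first in the lowest degrees, where only $W$-linear terms matter.
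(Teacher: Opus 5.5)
Your proposal has a genuine gap. Neither Step 2 nor Step 3 contains a mechanism that produces a contradiction, and the tools you offer cannot supply one.

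\textbf{Why your tools are insufficient.} The first two bullets of Step 2 use one cycle at a time, and a single cycle cannot force $V^{<n_1}=0$. For $(\Lambda(x_2,y_3),\,dy=x^2)$ the cycle $\theta=y^*$ satisfies $\mathrm{(G_1)}$, since $\Der_1$ is spanned by a multiple of $D(x^*)$, yet $V^2\neq 0$.

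The power chain is also not infinite. The power $\overline{v}^{\,j}$ lies in the domain of $\Phi_{\theta_1}$ only while $j(p-1)\le n_1-2$. The last admissible power gives $L_{-1}\Phi_{\theta_1}(\overline{v}^{\,j})=u^*$ with only $|u|\le p$. The case $|u|=p$ contradicts nothing, and it occurs whenever $p-1$ divides $n_1-p$. For $p$ even there are no powers at all.

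The combined injectivity you single out as the main obstacle does follow from the induction of Lemma \ref{lm1}, with the linear independence of $L_{-1}(\theta_1),L_{-1}(\theta_2)$ as base case. But it only yields dimension inequalities, and these are not decisive. $L_{-1}\Phi_{\theta_1}$ takes values in functionals on $V^{q}$ with $q\le n_1$, while $L_{-1}\Phi_{\theta_2}$ takes values in those with $q\ge n_2-n_1+1$. So for $n_2\ge 2n_1$ nothing is actually combined. For example, take $W=V$ spanned by generators of degrees $3,4,6$, with $n_1=3$ and $n_2=6$. This dimension profile satisfies every such inequality, although $n_1\neq n_2$. So the ``comparison'' in Step 3 has nothing to work with.

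\textbf{The missing idea.} Evaluate on $y_2$, where $L_{-1}(\theta_i)=y_i^*$, and read off the quadratic part $d_1y_2$.
\begin{itemize}
\item On the $\theta_1$ side, take a suitable $\eta=\Phi_{\theta_1}(\xi)$ with $L_{-1}(\eta)=u^*$. One checks that $(D\eta)(y_2)\in\Lambda^{\geq 2}V$, using $y_1^*(y_2)=0$ and degree reasons. Since $L_1\big((D\eta)(y_2)\big)=\pm u^*(d_1y_2)$, this gives $u^*(d_1y_2)=0$.
\item On the $\theta_2$ side, suppose $u\in W$. Then $D\Phi_{\theta_2}(\overline{u})=\pm\Phi_{\theta_2}(u-sdu)$, and $(\theta_2\cdot u)(y_2)=\pm u$ because $\theta_2(y_2)=1$; the $sdu$ term only contributes to $\Lambda^{\geq 2}V$. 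Hence $w^*:=L_{-1}\Phi_{\theta_2}(\overline{u})$ satisfies $u^*w^*(d_1y_2)=\pm 1$.
\item Since $[u^*,w^*]=0$, also $w^*u^*(d_1y_2)=\pm1$, contradicting $u^*(d_1y_2)=0$.
\end{itemize}

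The paper runs this argument three times:
\begin{itemize}
\item with $\eta=\Phi_{\theta_1}(\overline{x}^{\,r})$, to get $d|_{V^{<n_1}}=0$;
\item with $\eta=\theta_1$ and $u=y_1$, to get $n_1=n_2$;
\item with $\eta=\Phi_{\theta_1}(\overline{x'})$ for $x'\in V^{<n_1}$, to get $V^{<n_1}=0$.
\end{itemize}

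The requirement $u\in W$, so that $\overline{u}$ lies in the domain of $\Phi_{\theta_2}$, is what dictates the first step. Take $p$ to be the least degree on which $d\neq 0$. Take an odd generator $x\in V^{\le p}$; it exists, since otherwise $d(V^p)\neq 0$ would consist of odd elements of $\Lambda(V^{<p})$, which is concentrated in even degrees. Take $r$ maximal with $r(|x|-1)\le n_1-2$. Then $|u|\le|x|\le p$, so $u\in V_0\oplus V_1=W$.

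So your power chain is the right device, but its purpose is to land $u$ in $W$, not below degree $p$. It must then be paired with $\theta_2$ through $d_1y_2$.

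A minor point: Lemma \ref{lm1} with $m=n_1$ for $\theta_2$ requires $n_1<n_2$. In the equal-degree case you should use $m=n_1-1$.
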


\begin{proof}
    Without loss of generality, we might suppose $|\theta_1|\leq |\theta_2|$. Set
    $$L_{-1}(\theta_1)=y_1^\ast,\ \ L_{-1}(\theta_2)=y_2^\ast,$$
    for some $y_1,y_2\in V$. 

    Suppose that the restriction $d|_{V^{<|\theta_1|}}$ of the differential is non-trivial. It follows that $V_0^{<|\theta_1|}$ and $V_1^{<|\theta_1|}$ are both nonzero. Denote by $p$ the least degree  with the non-trivial differential. Thus, we can set 
    $$P=V^{<p}\subset V_0,\ \ Q=V^{p}\subset V_1. $$
    We claim that $P\oplus Q$ must contain a nonzero element of odd degree. In fact, suppose $P\oplus Q$ concentrates in even degrees. Then $0\neq dQ\subset (\Lambda P)^{p+1}$ has odd degree, a contradiction. Denote by $x$ the nonzero element in $P\oplus Q$ of odd degree. Let $r$ be the positive integer satisfying 
    $$r(|x|-1)\leq|\theta_1|-2 < (r+1)(|x|-1). $$ 
    Since $|\overline{x}|=|x|-1$ is even and $\overline{x}^r\in (\Lambda \overline{W} )^{\leq |\theta_1|-2}$, $\Phi_{\theta_1}(\overline{x}^r)$ is a well-defined derivation with $L_{-1}\Phi_{\theta_1}(\overline{x}^r)\neq 0$ by applying Lemma \ref{lm1} to $\theta_1$. Set $L_{-1}\Phi_{\theta_1}(\overline{x}^r)= u^\ast$ for some $u\in V$. Then we have 
    $$|u|=|\theta_1|-r|\overline{x}|<(r+1)(|x|-1)+2-r(|x|-1)=|x|+1,$$
    which implies $|u|\leq |x|\leq p$ and thus $u\in P\oplus Q\subset W$. Recall that $y_1^\ast,y_2^\ast$ are linearly independent, and thus $y_1^\ast(y_2)=0$. Then by direct computations, we can conclude that
    \begin{align*}
        \begin{split}
            (-1)^{|\theta_1|}\big(D\Phi_{\theta_1}(\overline{x}^r) \big)(y_2)&=\big(\Phi_{\theta_1}(d_W\overline{x}^r) \big)(y_2)  \\
            &=\Big(\Phi_{\theta_1}\big((x-sdx)\overline{x}^{r-1}\big) \Big)(y_2) \\
            &= \Big(\Phi_{\theta_1}(\overline{x}^{r-1})\cdot x \Big)(y_2) -\Big(\Phi_{\theta_1}\big((sdx)\overline{x}^{r-1}\big) \Big)(y_2) \ \in \Lambda^{\geq 2} V. 
        \end{split}
    \end{align*}
    This implies $u^\ast(dy_2) \in \Lambda^{\geq 2} V$ by wordlength. 

    On the other hand, since $|\overline{u}|=|u|-1\leq p-1<|\theta_1|-1$, $\Phi_{\theta_2}(\overline{u})$ is a well-defined derivation with $L_{-1}\Phi_{\theta_2}(\overline{u})\neq 0$ by applying Lemma \ref{lm1} to $\theta_2$. By direct computations, we have 
    \begin{align*}
        \begin{split}
            (-1)^{|\theta_2|}\big(D\Phi_{\theta_2}(\overline{u}) \big)(y_2)&= \big(\Phi_{\theta_2}(d_W\overline{u}) \big)(y_2) \\
            &=\big(\Phi_{\theta_2}(u-sdu) \big)(y_2) \\
            &= \big( \theta_2 \cdot u\big) (y_2) -\big(\Phi_{\theta_2}(sdu) \big)(y_2) \\
            &= (-1)^{|u||y_2|} u-\big(\Phi_{\theta_2}(u-sdu) \big)(y_2)\ \in (-1)^{|u||y_2|} u+\Lambda^{\geq 2} V. 
        \end{split}
    \end{align*}
    Set $L_{-1}\Phi_{\theta_2}(\overline{u})= w^\ast$ for some $w\in V$. It follows that 
    $$u^\ast w^\ast (dy_2) =\pm 1. $$ 
    Since $[u^\ast ,w^\ast ]=0$, we also have $w^\ast u^\ast(dy_2)= \pm 1$, which contradicts the fact $u^\ast(dy_2) \in \Lambda^{\geq 2} V$. Therefore, we finally conclude that 
    $$d|_{V^{<|\theta_1|}}=0,$$
    which implies $y_1\in V_1$. 

    Now suppose $|\theta_1|<|\theta_2|$. By direct computations, we have 
    $$\big(D \Phi_{\theta_1}(1)\big)(y_2)=\big(D\theta_1\big)(y_2)=0,$$
    which implies $y_1^\ast(dy_2) \in  \Lambda^{\geq 2} V$. Since $|\theta_1|<|\theta_2|$, $\Phi_{\theta_2}(\overline{y_1})$ is a well-defined derivation with $L_{-1}\Phi_{\theta_2}(\overline{y_1})\neq 0$. By similar arguments as above, we can obtain 
    \begin{equation*}
        (-1)^{|\theta_2|}\big(D\Phi_{\theta_2}(\overline{y_1}) \big)(y_2)= \big(\Phi_{\theta_2}(d_W\overline{y_1}) \big)(y_2) \  \in (-1)^{|y_1||y_2|}y_1 +\Lambda^{\geq 2} V, 
    \end{equation*}
    which contradicts the fact $y_1^\ast(dy_2) \in  \Lambda^{\geq 2} V$ in the same way. Thus, we reach the conclusion that $|\theta_1|=|\theta_2|$. 
    
    Finally, suppose that there exists a nonzero element $x^\prime\in V^{<|\theta_1|}$. Note that $|\theta_1|=|\theta_2|$ yields that $V^{<|\theta_1|}= V^{<|\theta_2|}\subset V_0$. Set $L_{-1} \Phi_{\theta_1}(\overline{x^\prime})=(u^{\prime})^\ast$. Then by considering 
    $$\big(D\Phi_{\theta_1}(\overline{x^\prime}) \big)(y_2) \ \ \mathrm{and} \ \ \big(D\Phi_{\theta_2}(\overline{u^\prime}) \big)(y_2), $$
    we get a contradiction by similar computations. Thus, we reach the conclusion that $V^{<|\theta_1|}=0$. This completes the proof of Lemma \ref{lm2}.

\end{proof}

\section{Proofs of the main theorems}

\begin{proof}[Proof of Theorem \ref{thm1}]

Suppose there exists a simply-connected space $X$ satisfying
$$\B (X)\simeq_{\mathbb{Q}} S^{2n},$$
with $G_{>2n-1}(X_\Q)\neq 0$. Let $(\Lambda V,d)$ be a Sullivan minimal model for $X$. By Theorem \ref{J}, we have
\begin{equation*}
\pi_{k}(\Omega S^{2n})\otimes\Q \cong H_k({\rm Der}'(\Lambda V),D)=\left\{
\begin{array}{lr}
\Q[\theta_2], &\ k=4n-2,\\
\Q[\theta_1], &\ k=2n-1,\\
0, &{\rm otherwise,\ }
\end{array}
\right.
\end{equation*}
with $[\theta_2]=\big[[\theta_1], [\theta_1]\big]$. We can suppose $\theta_2=[\theta_1, \theta_1]$. By Theorem \ref{GEtg}, we have
$$\Q L_{-1}(\theta_2)=G_{>2n-1}(\Lambda V,d)=G_{>2n-1}(X)\neq 0,$$
which implies $L_{-1}(\theta_2)\neq 0$. Since $\theta_2=[\theta_1, \theta_1]$, we can show that $L_{-1}(\theta_1)\neq 0$ by direct computations. 

Note that $\Der^\prime(\Lambda V)$ satisfies the condition $\mathrm{(G_k)}$ for $1\leq k\leq 4n-4$. By Lemma \ref{lm2}, we have $|\theta_1|=|\theta_2|$, a contradiction. Thus, we reach the conclusion that $G_{>2n-1}(X_\Q)=0$. 

Suppose that $X$ is $\pi$-finite. By Theorem \ref{pf}, we have 
$$\pi_i(X) =\left\{
\begin{array}{ll}
\mathbb{Q} , & i = 4n-2,\\
0, &i > 4n-2, 
\end{array}
\right.$$
which implies $G_{4n-2}(X_\Q)=\mathbb{Q}$. This completes the proof of Theorem \ref{thm1}.

\end{proof}

\begin{proof}[Proof of Theorem \ref{thm2}]
Suppose there exists a simply-connected and $\pi$-finite space $X$ satisfying
$$\B (X)\simeq_{\mathbb{Q}} K(\Q^s,n)\times K(\Q^t,n+1)$$
with $n\geq 2$ and $s,t\geq 1$. Let $(\Lambda V,d)$ be a Sullivan minimal model for $X$. Then by Theorems \ref{J} and \ref{pf}, we have
\begin{equation*}
 H_k({\rm Der}'(\Lambda V),D)=\left\{
\begin{array}{lr}
\Q[y_{1}^*]\oplus\dots \oplus \Q[y_{t}^*], &k=n,\ \ \ \\
\Q[\theta_1]\oplus\dots\oplus \Q[\theta_s], &k=n-1,\\
0, &{\rm otherwise,}
\end{array}
\right.
\end{equation*}
where $V^{n}=\Q \{y_1,\dots,y_t\}$ and $V^{> n}=0$. Since $X$ is simply-connected, we have $V=V^{\geq 2}$, and $\theta_1=x_1^\ast$ for some nonzero element $x_1\in V^{n-1}$ for degree reasons. Set $\theta_2=y_{1}^*$.

Note that $\Der^\prime(\Lambda V)$ satisfies the condition $\mathrm{(G_k)}$ for $1\leq k\leq n-2$. By applying Lemma \ref{lm2} to $\theta_1$ and $y_1^\ast$, we have $|\theta_1|=|y_1^\ast|$, a contradiction. This completes the proof of Theorem \ref{thm2}.

\end{proof}

\begin{proof}[Proof of Theorem \ref{thm3}]
Let $(\Lambda V,d)$ be a Sullivan minimal model for $X$. By Theorem \ref{J} and Proposition \ref{pf}, we have
\begin{equation*}
 H_k({\rm Der}'(\Lambda V),D)=\left\{
\begin{array}{lr}
\Q[y_{1}^*]\oplus\dots \oplus \Q[y_{r}^*], &k=n-1,\\
0, &{\rm otherwise,}
\end{array}
\right.
\end{equation*}
where $V^{n-1}=\Q \{y_1,\dots,y_r\}$ and $V^{> n-1}=0$. Set $\theta_1=y_1^\ast$ and $\theta_2=y_2^\ast$.

Note that $\Der^\prime(\Lambda V)$ satisfies the condition $\mathrm{(G_k)}$ for $0\leq k\leq n-3$. By Lemma \ref{lm2}, we have $V^{<|\theta_1|}=V^{<n-1}=0$. This implies $X\simeq_{ \mathbb{Q}} K(\Q^{r},n-1)$.

\end{proof}

\begin{remark}
    Lemmas \ref{lm1} and \ref{lm2}  have broader applications beyond the results considered here. Many non-realization results can also be given within the context of $\pi$-finite spaces, such as the non-realization of $S^{2n}\times K(\Q^r,2n-1)$. Further applications will be considered in our forthcoming work.
\end{remark}


\begin{thebibliography}{10}

\bibitem{BY}
Y. Bai, X. Liu, and S. Xie,
{\em Some notes on spaces realized as classifying spaces}, Topology Appl. {\bf 356} (2024), Paper No. 109030, 12 pp. 

\bibitem{BY1}
Y. Bai and X. Liu, {\em Gottlieb elements and rational homotopy types realized as classifying spaces}, Topology Appl. {\bf 373} (2025), Paper No. 109525, 19 pp. 

\bibitem{D}
A. Dold, {\em Halbexakte Homotopiefunktoren}, Lecture Notes in Mathematics, 12, Springer, Berlin-New York, 1966. 

\bibitem{F-H-T}
Y. F\'{e}lix, S. Halperin, and J.-C. Thomas, {\em Rational homotopy theory}, Graduate Texts in Mathematics, Vol. 205, Springer-Verlag, New York, 2001.

\bibitem{F-M-T}
Y. F\'elix, J. M. Moreno-Fern\'andez and D. Tanr\'e, {\em Lie models for nilpotent spaces}, Manuscripta Math. 159 (2019), 161--170.

\bibitem{G}
D. H. Gottlieb, {\em Evaluation subgroups of homotopy groups}, Amer. J. Math. {\bf 91} (1969), 729--756.

\bibitem{2007J-B}
G. Lupton and S.~B. Smith, {\em Rationalized evaluation subgroups of a map. I. Sullivan models, derivations and $G$-sequences}, J. Pure Appl. Algebra {\bf 209} (2007), no.~1, 159--171.

\bibitem{J-B}
G. Lupton and S.~B. Smith, {\em Realizing spaces as classifying spaces}, Proc. Amer. Math. Soc. {\bf 144} (2016), no.~8, 3619--3633. 

\bibitem{2019L-S}
G. Lupton and S.~B. Smith, {\em The universal fibration with fibre $X$ in rational homotopy theory}, J. Homotopy Relat. Struct. {\bf 15} (2020), no.~2, 351--368. 

\bibitem{GE}
G. Lupton and S.~B. Smith, {\em The structuring effect of a Gottlieb element on the Sullivan model of a space}, Homology Homotopy Appl. {\bf 25} (2023), no.~2, 275--296. 

\bibitem{M}
J.~P. May, {\em Classifying spaces and fibrations}, Mem. Amer. Math. Soc. {\bf 1} (1975), 1, no. 155, {\rm xiii}+98 pp.

\bibitem{SS}
M. Schlessinger and J. Stasheff, {\em Deformation theory and rational homotopy type}, arXiv:1211.1647v1. 

\bibitem{J}
J. Stasheff, {\em A classification theorem for fibre spaces}, Topology {\bf 2} (1963), 239--246. 

\bibitem{S}
D. Sullivan, {\em Infinitesimal computations in topology}, Inst. Hautes \'Etudes Sci. Publ. Math. No. 47 (1977), 269--331. 

\bibitem{T}
D. Tanr\'e, {\em Homotopie rationnelle: mod\`eles de Chen, Quillen, Sullivan}, Lecture Notes in Mathematics, Vol. 1025, Springer-Verlag, Berlin, 1983.

\bibitem{W}
G. Whitehead, {\em Elements of homotopy theory}, Graduate Texts
in Mathematics, Vol. 61, Springer-Verlag, New York, 1978. 

\bibitem{Y}
T. Yamaguchi, {\em When is the classifying space for elliptic fibrations rank one?}, Bull. Korean Math. Soc. {\bf 42} (2005), no.~3, 521--525. 


\end{thebibliography}
\end{document}